\makeatletter\@addtoreset{equation}{section}\makeatother
\newtheorem{thm}{Theorem}[section]
\newtheorem{prop}[thm]{Proposition}
\newtheorem{rmk}[thm]{Remark}
\begin{document}

\title{Radial amplitude equations for fully localised planar patterns}
\author[1]{Dan J. Hill}
\author[2]{David J. B. Lloyd}

\affil[1]{\small Fachrichtung Mathematik, Universit\"at des Saarlandes, Postfach 151150, 66041 Saarbr\"ucken, Germany}
\affil[2]{\small Department of Mathematics, University of Surrey, Guildford, GU2 7XH, UK}

\date{}
\maketitle

%%%%%%%%%%%%%%%%%%%%%%%%%%%%%%%%%%%%%%%%%
\begin{abstract}
Isolated patches of spatially oscillating pattern have been found to emerge near a pattern-forming instability in a wide variety of experiments and mathematical models. However, there is currently no mathematical theory to explain this emergence or characterise the structure of these patches.
We provide a method for formally deriving radial amplitude equations to planar patterns via non-autonomous multiple-scale analysis and convolutional sums of products of Bessel functions. Our novel approach introduces nonautonomous differential operators, which allow for the systematic manipulation of Bessel functions, as well as previously unseen identities involving infinite sums of Bessel functions. Solutions of the amplitude equations describe fully localised patterns with non-trivial angular dependence, where localisation occurs in a purely radial direction. Amplitude equations are derived for multiple examples of patterns with dihedral symmetry, including fully localised hexagons and quasipatterns with twelve-fold rotational symmetry. In particular, we show how to apply the asymptotic method to the Swift--Hohenberg equation and general reaction-diffusion systems. 
%%%%%%%%%%%%%%%%%%%%%%%%%%%%%%%%%%%%%%%%%%%%%%%%%%%%%%%%%%%%%%%%%%%%%%%%%%%%%%%%%%%%%%%%%%%%

\end{abstract}
%
%
%
%
%
%%%%%%%%%%%%%%%%%
%   Section 1   %
%%%%%%%%%%%%%%%%%
%
%
%
%
%
\section{Introduction}\label{s:intro}
The emergence of fully localised patterns---consisting of compact patches of pattern surrounded by a uniform state---have been observed in a wide variety of physical and mathematical models~\cite{Lloyd2008LocalizedHexagons,Subramanian2018LocalisedQC,knobloch2008spatially,knobloch2015spatial}. Numerically, these patches are found to emerge from a pattern-forming or Turing instability (see Figure~\ref{fig:hex_bif}) but there currently does not exist a mathematical theory to explain their formation from quiescence. The prototypical equation that exhibits these localised structures is the planar Swift--Hohenberg equation (SHE) given by~\cite{SwiftHohenberg1977Hydrodynamic}
\begin{equation}\label{e:SHE}
    \partial_t u = -(1+\Delta)^2 u - \mu u + \nu u^2 - u^3
\end{equation}
where $u=u(t,x,y)\in\mathbb{R}$, $\Delta:= \partial_{x}^{2} + \partial_{y}^{2}$ is the planar Laplacian, $\nu\in\mathbb{R}$ is fixed, and $0<\mu\ll1$ acts as a bifurcation parameter such that there is a pattern-forming instability at $\mu=0$. Crucially, it is observed that near onset the patches appear to have a solution structure described by the product of a slowly varying radial envelope with a domain-covering pattern. 
\begin{figure}[t]
    \centering
    \includegraphics[width=\linewidth]{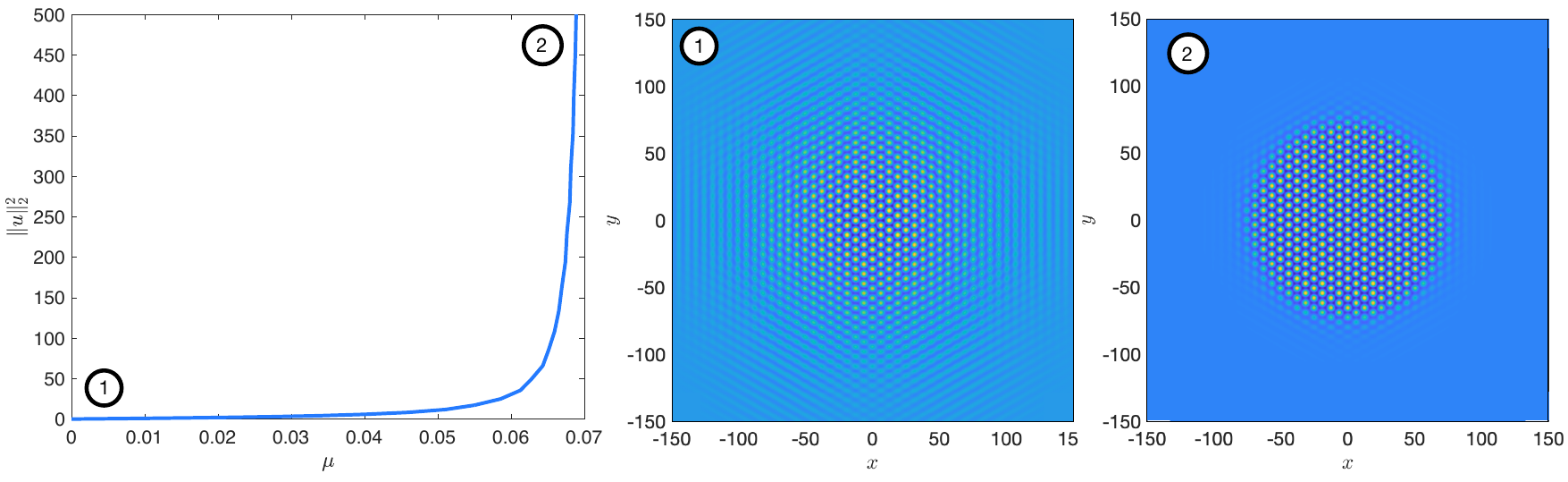}
    \caption{Bifurcation diagram of a localised hexagon patch in the Swift--Hohenberg equation~(\ref{e:SHE}) for $\nu=0.9$. The computations are carried out as described in~\cite{Lloyd2008LocalizedHexagons}.}
    \label{fig:hex_bif}
\end{figure}

In one spatial dimension, there is a well-established weakly nonlinear approach to studying the emergence of localised patterns near $\mu=0$, via multiple-scales analysis. Here, one formally derives an equation for a slowly varying envelope over a fast varying periodic pattern by carrying out an expansion of the form
\begin{equation*}
u(t,x) = \varepsilon\left(A(T,X)e^{ix} + c.c. \right) + \mathcal{O}(\varepsilon^2),\qquad \mu = \varepsilon^2\hat\mu,\qquad (T,X) = (\varepsilon^2t, \varepsilon x),
\end{equation*}
where $0<\varepsilon\ll1$ is the asymptotic small parameter, $A\in\mathbb{C}$ and $c.c$ stands for the complex conjugate of the preceding term. Carrying out a multiple-scales analysis (e.g.~\cite{Burke2007Normal}) for the SHE leads to an amplitude equation for $A$ given by the cubic Ginzburg--Landau equation
\begin{equation}\label{e:Ginz-1D}
    A_T = 4A_{XX} - \hat{\mu} A + 4\left(\frac{19\nu^2}{18} - \frac{3}{4}\right) |A|^2A.
\end{equation}
Justification of this approach in one dimension has been carried out in~\cite{Melbourne2004ComplexGL,Melbourne2004RealGL,Schneider1996GeneralizedGL}. In particular, it was proven by Schneider \cite{Schneider1994ErrorEstimatesGL} that the Ginzburg--Landau approximation results in an error of size $\mathcal{O}(\varepsilon^{\frac{3}{2}})$ for all $t<T_{\varepsilon} = \mathcal{O}(\varepsilon^{-2})$.

The one dimensional analysis was later extended to the radial (axisymmetric) case in~\cite{scheel2003radially,Lloyd2009LocalizedRadial,McCalla2013Spots,Hill2022LocalizedVegetation} where they considered stationary axisymmetric solutions of (\ref{e:SHE}) and carried out rigorous radial normal form analysis. Their proofs consist of matching a core-manifold of solutions (i.e. solutions that remain bounded as $r\rightarrow0$) with a far-field manifold (i.e. solutions that decay to zero exponentially fast in $r$) to construct a variety of localised radial solutions. Localised ring solutions in the far-field manifold were then found to solve a nonautonomous variant of the Ginzburg-Landau equation, given by
\begin{equation*}
0 = 4\left(\frac{\mathrm{d}}{\mathrm{d}R} + \frac{1}{2R}\right)^2 A - \hat{\mu} A + 4\left(\frac{19\nu^2}{18} - \frac{3}{4}\right) |A|^2A,
\end{equation*}
where $R=\varepsilon r$ is a slow spatial scale. 

Recently, Hill et al.~\cite{Hill2023DihedralSpots,Hill2024DihedralRings} developed an approximate theory for the emergence of fully localised planar patterns with dihedral symmetry. In particular, they studied the existence of patches and rings for general two-component reaction-diffusion systems near a pattern-forming instability by carrying out a finite-Fourier decomposition in the angular coordinate. This decomposition resulted in a large system of radial ODEs, for which they performed a similar rigorous radial analysis as in the axisymmetric case and showed the existence of various approximate localised patches and rings. 

Asymptotic analysis of general 2D localised patterns is significantly more difficult, as one can not simply reduce to an ordinary differential equation. Amplitude equations for planar hexagons have been derived where one carries out an asymptotic expansion of the form
\begin{equation*}
u(t,x,y) = \varepsilon\,\left(A_1(T,X,Y)e^{ix} + A_2(T,X,Y)e^{i(x+\sqrt{3}y)/2} + A_3(T,X,Y)e^{i(x-\sqrt{3}y)/2} + c.c.\right) + \mathcal{O}(\varepsilon^2),
\end{equation*}
where $(T,X,Y)=(\varepsilon^2 t, \varepsilon x,\varepsilon y)$ and $A_i$ are complex functions; see for instance~\cite{Hoyle2006} for a review. One can then derive three coupled amplitude equations for each of the $A_i$'s. However, even in the case where each $A_i$ only depends on one slow spatial variable, these equations have no explicit solution and their analysis remains a significant challenge; see for example~\cite{Doelman2003hexagon}.  

In this paper, we instead extend the weakly nonlinear analysis of~\cite{Burke2007Normal} to polar coordinates. In particular, we formally look for a slowly varying axisymmetric envelope over a domain-covering pattern, using asymptotic techniques to derive a radial amplitude equation. Using this approach, we formally show the bifurcation of 4 different types of fully localised patterns; stripes, hexagons, rhomboids, and a twelve-fold quasipattern, as shown in Figure~\ref{fig:loc-profiles}. We expand $u$ in an angular Fourier series given by
\begin{equation*}
    u(t,x,y) = \tilde{u}(t,r,\theta) = \sum_{n\in\mathbb{Z}} u_{n}(t,r)\mathrm{e}^{\mathrm{i}n\theta},
\end{equation*}
with the reality condition $u_{-n} = \overline{u_n}$ for all $n\in\mathbb{Z}$, such that the SHE \eqref{e:SHE} becomes
\begin{equation}\label{e:SHE-n}
    \partial_t u_n = -(1+\Delta_n)^2 u_n - \mu u_n + \nu \sum_{i+j=n}u_{i}u_{j} - \sum_{i+j+k=n}u_{i}u_{j}u_{k}
\end{equation} 
for each $n\in\mathbb{Z}$, where $\Delta_{n}:=\partial_{r}^{2} + \frac{1}{r}\partial_{r} - \frac{n^2}{r^2}$. We note that the angular Fourier projection has transformed each nonlinear term in \eqref{e:SHE} into a convolutional sum in \eqref{e:SHE-n}, which we exploit below. We then look for solutions near the pattern-forming instability by carrying out an expansion of the form
\begin{equation*}
u_n(t,r) = \varepsilon\,\left( A(T,R)\, a_n J_{n}(r) + \overline{A(T,R)}\, a_n J_{-n}(r)\right) + \mathcal{O}(\varepsilon^2), \qquad \mu = \varepsilon^2\hat{\mu},\qquad (T,R) = (\varepsilon^2 t, \varepsilon r),
\end{equation*}
where $A\in\mathbb{C}$ is a slowly varying envelope, $\{a_k\}_{k\in\mathbb{Z}}$ are fixed coefficients that define the geometry of the domain-covering pattern, and $J_{n}(r)$ is the $n$-th order Bessel function of the first kind. The key here is that the slowly varying envelope is the same for each Fourier mode $u_n$, allowing us to construct a single amplitude equation for $A$. The amplitude $A$ can then be thought of as an axisymmetric amplitude of the planar function $u(t,x,y)$.

We note that the use of Bessel functions $J_n(r)$ for $n\in\mathbb{Z}$ here is not a particular choice, but rather is enforced by the form of the linear operator $(1+\Delta_n)^2$ of \eqref{e:SHE-n}. In weakly nonlinear analysis, one wants to decompose the solution onto the eigenfunctions of the time-independent linear operator, so that one eigenfunction lies in the kernel of the operator and the remaining elements lie in the range of the operator. This is so that higher order correction terms can be used to remove nonlinear resonances of lower order terms. For example, in one spatial dimension the complex exponential functions $\mathrm{e}^{\mathrm{i}\lambda x}$ act as eigenfunctions of the linear operator $(1+\partial_{x}^2)^2$ and, in particular $(1+\partial_{x}^2)^2\mathrm{e}^{\mathrm{i}x} = 0$. Hence, in the one dimensional weakly nonlinear analysis, leading order solutions depend on $\mathrm{e}^{\mathrm{i}x}$ and higher order corrections are decomposed onto the eigenfunctions $\mathrm{e}^{\mathrm{i}\lambda x}$. Returning to the radial linear operator in \eqref{e:SHE-n}, we note that the equation
\begin{equation}\label{e:Bessel}
    (1 + \Delta_n)u_n = \left(\partial_{r}^{2} + \frac{1}{r}\partial_{r} + \left(1 -  \frac{n^2}{r^2}\right)\right)u_n = 0
\end{equation}
is Bessel's equation, and $J_n(\lambda r)$ is an eigenfunction of $(1+\Delta_n)^2$ with eigenvalue $(1-\lambda^2)^2$. Hence, the eigenfunction $J_n(r)$ lies in the kernel of $(1+\Delta_n)^2$ for each $n\in\mathbb{Z}$, and so we require our leading order solution to depend on $J_{n}(r)$ and higher order corrections to be decomposed onto the eigenfunctions $J_{n}(\lambda r)$.

\begin{figure}[t]
    \centering
    \includegraphics[width=\linewidth]{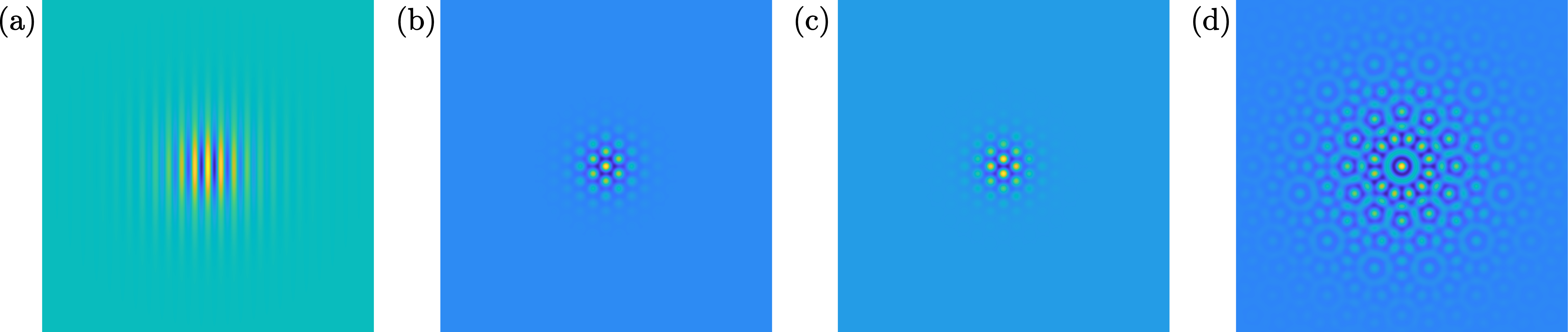}
    \caption{Examples of fully localised (a) stripes, (b) on-centre hexagons, (c) off-centre hexagons, and (d) a twelve-fold quasipattern}
    \label{fig:loc-profiles}
\end{figure}

In order to carry out the multiple-scales analysis in polar coordinates, several problems need to be overcome. The first is that Bessel functions do not work well with the standard radial differential operator $\partial_r$. For example, 
\begin{equation*}
\partial_r J_{n}(r) = \frac{1}{2}\left(J_{n-1}(r) - J_{n+1}(r)\right) = J_{n-1}(r) - \frac{n}{r}J_{n}(r) = -J_{n+1}(r) + \frac{n}{r}J_{n}(r).
\end{equation*}
for an arbitrary $n\in\mathbb{Z}$, and so applying $\partial_r$ to $J_{n}(r)$ either requires keeping track of two sets of shifting indices or introduces additional nonautonomous terms into the problem. Hence, we need to find a method to simplify the differentiation of Bessel functions. We do this by introducing what we call {\it Bessel differential operators}
\begin{equation*}
\mathcal{D}_{n}:= \partial_r + \frac{n}{r},\qquad n\in\mathbb{Z}.
\end{equation*}
These operators form an integral part of the recent radial function space theory of \cite{GrovesHill2024FunctionSpaces}, where $\mathcal{D}_{n}, \mathcal{D}_{-n}$ act as the natural differential operators for an $n$-th angular Fourier mode $u_n(r)$. For our purposes, the operators $\mathcal{D}_{n}, \mathcal{D}_{-n}$ have the convenient property that 
\begin{equation*}
\mathcal{D}_{n} J_{n}(r) = J_{n-1}(r),\qquad \qquad \mathcal{D}_{-n} J_{n}(r) = -J_{n+1}(r),
\end{equation*}
for all $n\in\mathbb{Z}$, and so they effectively act as shift operators on the index $n$ of $J_n(r)$. By using these Bessel operators, we are able to preserve much of the radial harmonic structure of the operator $\Delta_n$ in our multiple-scales analysis.

The second problem is how to analyse nonlinear terms involving Bessel functions. In particular, we need to be able to compare nonlinear convolutional sums of Bessel functions with higher-frequency terms $J_{k}(m r)$. We derive several useful identities, including
\begin{equation*}
    \sum_{i+j+k=n} J_{i}(a r)J_{j}(b r)J_{\pm k}(c r) = J_{n}((a+b\pm c)r ),
\end{equation*}
which solves this problem. In this case, we can thus treat products of Bessel functions as radial analogues of the complex exponential function; e.g.,
    \begin{equation*}
        \mathrm{e}^{\mathrm{i}r}\mathrm{e}^{\mathrm{i}4r}\mathrm{e}^{-\mathrm{i}3r} = \mathrm{e}^{\mathrm{i}2r}, \qquad\quad\sim\qquad \sum_{i+j+k=n} J_{i}(r)J_{j}(4r)J_{-k}(3r) = J_{n}(2r).
    \end{equation*}
When replacing each Bessel function $J_{i}(r)$ by $a_i J_i(r)$, the resulting identities become significantly more complicated. However, the method to derive these identities remains simple and is very easy to automate. In this work, we restrict to a small number of particular cases whose identities we derive in \S\ref{ss:bessel-nonlin}. To the authors' knowledge, the identities presented in \S\ref{ss:bessel-nonlin} have not been stated elsewhere, other than the folkloric result that

\begin{equation*}
    \sum_{i\in\mathbb{Z}} J_{i}(r)J_{i}(r) = 1.
\end{equation*}

However, the derivation of each identity in \S\ref{ss:bessel-nonlin} is so straightforward that it would be na\"ive to assume it has not been proven up until now. One aim of this work is then to promote the knowledge of these identities to the wider community and highlight the practicality of using Bessel functions in the study of fully localised planar patterns.

The rest of the paper is outlined as follows. In \S\ref{s:toolbox} we show how to expand the linear SHE operator $(1+\Delta_n)^2$ in terms of the Bessel differential operators $\mathcal{D}_n$ with multiple scales $r$ and $R=\varepsilon r$, before then deriving various important identities regarding convolutional sums of products of Bessel functions. Then, in \S\ref{s:amplitude} we carry out the multiple scales asymptotic expansion for the SHE to derive radial amplitude equations for fully localised stripes, hexagons, rhomboids and a twelve-fold quasipattern. We extend this approach to a general reaction--diffusion system in \S\ref{s:RD}, where we again derive an amplitude equation for fully localised hexagons. Finally, we conclude in \S\ref{s:Discussion} with a discussion of the strengths and limitations of our approach, as well as the next possible extensions of this work.

%
%
%
%
%
%%%%%%%%%%%%%%%%%
%   Section 2   %
%%%%%%%%%%%%%%%%%
%
%
%
%
%
\section{Bessel functions: differentiation and nonlinearity}\label{s:toolbox}
The key issues when considering radial differential equations such as \eqref{e:SHE-n} are the presence of non-autonomous differential operators and maintaining control of the nonlinear terms. To alleviate each of these problems, we present the key tools that will allow us to derive radial amplitude equations in \S\ref{s:amplitude}. We begin by introducing nonautonomous differential operators that will be convenient when working with Bessel functions. Following this, we derive identities involving convolutional sums of Bessel functions which will enable us to simplify nonlinear terms in our differential equation.

%%%%%%%%%%%%%%%%%%%%%%%%%%%%%%%%%%%%%%%%%%%%
\subsection{Radial differential operators}\label{ss:bessel-deriv}
We first explore the role of differentiation in the analysis of Bessel functions. To simplify the differentiation of Bessel functions, we introduce $n$-indexed nonautonomous differential operators $\mathcal{D}_{\pm n} := \partial_r \pm \frac{n}{r}$ for any $n\in\mathbb{Z}$, which we call \emph{Bessel differential operators}. These operators have the convenient property that
\begin{equation*}
\mathcal{D}_{n} J_{n}(r) = J_{n-1}(r),\qquad \qquad \mathcal{D}_{-n} J_{n}(r) = -J_{n+1}(r),
\end{equation*}
for all $n\in\mathbb{Z}$, and so they effectively act as shift operators on the index $n$ of $J_n(r)$. The Bessel operators satisfy the following commutation relation $\mathcal{D}_{m}\mathcal{D}_{n} = \mathcal{D}_{n+1}\mathcal{D}_{m-1}$ for any $n,m\in\mathbb{Z}$, and we note that 
\begin{equation*}
\mathcal{D}_{1-n}\mathcal{D}_{n} = \partial_r^2 + \frac{1}{r}\partial_r - \frac{n^2}{r^2} = \Delta_n.    
\end{equation*}
where Bessel's equation can then be written as $(1+\mathcal{D}_{1-n}\mathcal{D}_{n})u=0$. 

In order to follow the approach of Burke and Knobloch \cite{Burke2007Normal} to formally derive radial amplitude equations for \eqref{e:SHE-n}, we must first introduce radial multiple-scale analysis which we design to be convenient when working with Bessel functions. We introduce a slow radial variable $R:=\varepsilon r$, and define the $n$-indexed slow differential operators $\hat{\mathcal{D}}_{\pm n}:=\left(\partial_{R} \pm \frac{n}{R}\right)$ and $\hat{\Delta}_{n}:=\hat{\mathcal{D}}_{1-n}\hat{\mathcal{D}}_{n}$. We consider a function $u_n = u_n(r,R)$, where we assume that $u_n$ acts like an $J_n$ Bessel function with respect to $r$ and like an axisymmetric function with respect to $R$. This assumption means we choose to apply the operators $\{\mathcal{D}_{\pm n}, \Delta_{n}\}$ and $\{\hat{\mathcal{D}}_{0},\hat{\Delta}_{0}\}$ to $u_{n}$, respectively.

We note that, for any function $f(r,R)$, we can write
\begin{equation*}
    \left(\tfrac{\mathrm{d}}{\mathrm{d}r} + \tfrac{n}{r}\right)f = \left(\partial_{r} + \tfrac{k}{r}\right)f + \varepsilon\,\left(\partial_{R} + \tfrac{n-k}{R}\right)f = \mathcal{D}_{k}f + \varepsilon\,\hat{\mathcal{D}}_{n-k}f
\end{equation*}
for any $k\in\mathbb{Z}$. Hence, when considering the function $u_n$ we choose the index $k$ so that we are always applying a convenient Bessel operator to the Bessel function that we assume $u_n$ acts like. In particular, we see that
\begin{equation*}
    \begin{split}
        \Delta_{n}u_n ={}& \left(\tfrac{\mathrm{d}}{\mathrm{d}r} + \tfrac{1-n}{r}\right)\left[\mathcal{D}_{n}u_n + \varepsilon\,\hat{\mathcal{D}}_{0}u_n\right]\\
        ={}& \left(\mathcal{D}_{1-n} + \varepsilon\,\hat{\mathcal{D}}_{0}\right)\mathcal{D}_{n}u_n + \varepsilon\,\left(\mathcal{D}_{-n} + \varepsilon\,\hat{\mathcal{D}}_{1}\right)\hat{\mathcal{D}}_{0}u_n\\
        ={}& \mathcal{D}_{1-n}\mathcal{D}_{n}u_n + \varepsilon\left( \mathcal{D}_{n} + \mathcal{D}_{-n}\right)\hat{\mathcal{D}}_{0}u_n + \varepsilon^2\,\hat{\mathcal{D}}_{1}\hat{\mathcal{D}}_{0}u_n.\\
    \end{split}
\end{equation*}
Note that for the term $\hat{\mathcal{D}}_0 u_n$ we have chosen $(\tfrac{\mathrm{d}}{\mathrm{d}r} + \tfrac{1-n}{r}) = (\mathcal{D}_{-n} + \varepsilon\,\hat{\mathcal{D}}_{1})$, since we expect $\hat{\mathcal{D}}_0 u_n$ to act like $J_n(r)$, whereas for the term $\mathcal{D}_n u_n$ we have chosen $(\tfrac{\mathrm{d}}{\mathrm{d}r} + \tfrac{1-n}{r}) = (\mathcal{D}_{1-n} + \varepsilon\,\hat{\mathcal{D}}_{0})$, since we expect $\mathcal{D}_{n}u_n$ to act like $\mathcal{D}_{n}J_n(r) = J_{n-1}(r)$. Thus, we have so far determined the following
\begin{equation*}
    \begin{split}
        \left(1 + \Delta_{n}\right)u_n ={}& \left(1 + \mathcal{D}_{1-n}\mathcal{D}_{n}\right)u_n + \varepsilon\left( \mathcal{D}_{n} + \mathcal{D}_{-n}\right)\hat{\mathcal{D}}_{0}u_n + \varepsilon^2\,\hat{\mathcal{D}}_{1}\hat{\mathcal{D}}_{0}u_n.\\
    \end{split}
\end{equation*}
Applying the operator $(\tfrac{\mathrm{d}}{\mathrm{d}r} + \tfrac{n}{r})$, we obtain
\begin{equation*}
    \begin{split}
        (\tfrac{\mathrm{d}}{\mathrm{d}r} + \tfrac{n}{r})\left(1 + \Delta_{n}\right)u_n ={}& (\tfrac{\mathrm{d}}{\mathrm{d}r} + \tfrac{n}{r})\left[\left(1 + \mathcal{D}_{1-n}\mathcal{D}_{n}\right)u_n + \varepsilon \mathcal{D}_{n}\hat{\mathcal{D}}_{0}u_n + \varepsilon \mathcal{D}_{-n}\hat{\mathcal{D}}_{0}u_n + \varepsilon^2\,\hat{\mathcal{D}}_{1}\hat{\mathcal{D}}_{0}u_n\right],\\
        ={}& \left(\mathcal{D}_{n} + \varepsilon\,\hat{\mathcal{D}}_{0}\right)\left(1 + \mathcal{D}_{1-n}\mathcal{D}_{n}\right)u_n + \varepsilon \left(\mathcal{D}_{n-1} + \varepsilon\,\hat{\mathcal{D}}_{1}\right)\mathcal{D}_{n}\hat{\mathcal{D}}_{0}u_n \\
        &\qquad + \varepsilon \left(\mathcal{D}_{n+1} + \varepsilon\,\hat{\mathcal{D}}_{-1}\right)\mathcal{D}_{-n}\hat{\mathcal{D}}_{0}u_n + \varepsilon^2\,\left(\mathcal{D}_{n} + \varepsilon\,\hat{\mathcal{D}}_{0}\right)\hat{\mathcal{D}}_{1}\hat{\mathcal{D}}_{0}u_n,\\
        ={}& \mathcal{D}_{n}\left(1 + \mathcal{D}_{1-n}\mathcal{D}_{n}\right)u_n + \varepsilon\,\left[\left(1 + \mathcal{D}_{1-n}\mathcal{D}_{n}\right)\hat{\mathcal{D}}_{0}u_n +  \left(\mathcal{D}_{n-1} + \mathcal{D}_{1-n}\right)\mathcal{D}_{n}\hat{\mathcal{D}}_{0}u_n\right]\\
        &\qquad + \varepsilon^2 \left[2\mathcal{D}_{n}\hat{\mathcal{D}}_{1}\hat{\mathcal{D}}_{0}u_n + \mathcal{D}_{-n}\hat{\mathcal{D}}_{-1}\hat{\mathcal{D}}_{0}u_n\right] + \mathcal{O}(\varepsilon^3).\\
    \end{split}
\end{equation*}
Finally, we apply $(\tfrac{\mathrm{d}}{\mathrm{d}r} + \frac{1-n}{r})$, resulting in 
\begin{equation*}
    \begin{split}
        \Delta_{n}\left(1 + \Delta_{n}\right)u_n ={}& \left(\mathcal{D}_{1-n} + \varepsilon \hat{\mathcal{D}}_{0}\right)\mathcal{D}_{n}\left(1 + \mathcal{D}_{1-n}\mathcal{D}_{n}\right)u_n+ \varepsilon\,\left[\left(\mathcal{D}_{-n} + \varepsilon \hat{\mathcal{D}}_{1}\right)\left(1 + \mathcal{D}_{1-n}\mathcal{D}_{n}\right)\hat{\mathcal{D}}_{0}u_n\right]\\
        & + \varepsilon\,\left[\left(\mathcal{D}_{2-n} + \varepsilon \hat{\mathcal{D}}_{-1}\right)\mathcal{D}_{n-1}\mathcal{D}_{n}\hat{\mathcal{D}}_{0}u_n + \left(\mathcal{D}_{-n} + \varepsilon \hat{\mathcal{D}}_{1}\right)\mathcal{D}_{1-n}\mathcal{D}_{n}\hat{\mathcal{D}}_{0}u_n\right]\\
        & + \varepsilon^2 \left[2\left(\mathcal{D}_{1-n} + \varepsilon \hat{\mathcal{D}}_{0}\right)\mathcal{D}_{n}\hat{\mathcal{D}}_{1}\hat{\mathcal{D}}_{0}u_n + \left(\mathcal{D}_{-(n+1)} + \varepsilon \hat{\mathcal{D}}_{2}\right)\mathcal{D}_{-n}\hat{\mathcal{D}}_{-1}\hat{\mathcal{D}}_{0}u_n\right] + \mathcal{O}(\varepsilon^3),\\
        ={}& \mathcal{D}_{1-n}\mathcal{D}_{n}\left(1 + \mathcal{D}_{1-n}\mathcal{D}_{n}\right)u_n + \varepsilon\,\left[\left(\mathcal{D}_{n} + \mathcal{D}_{-n}\right)\left(1 + 2\mathcal{D}_{1-n}\mathcal{D}_{n}\right)\hat{\mathcal{D}}_{0}u_n\right]\\
        & + \varepsilon^2 \left[\left(1 + 4\mathcal{D}_{1-n}\mathcal{D}_{n}\right)\hat{\mathcal{D}}_{1}\hat{\mathcal{D}}_{0}u_n + \mathcal{D}_{-1-n}\mathcal{D}_{-n}\hat{\mathcal{D}}_{-1}\hat{\mathcal{D}}_{0}u_n + \mathcal{D}_{n-1}\mathcal{D}_{n}\hat{\mathcal{D}}_{-1}\hat{\mathcal{D}}_{0}u_n\right] + \mathcal{O}(\varepsilon^3),\\
    \end{split}
\end{equation*}
which implies that
\begin{equation*}
    \begin{split}
        \left(1 + \Delta_{n}\right)^2 u_n ={}& \left(1 + \mathcal{D}_{1-n}\mathcal{D}_{n}\right)^2 u_n + \varepsilon\,\left[2\left(\mathcal{D}_{n} + \mathcal{D}_{-n}\right)\left(1 + \mathcal{D}_{1-n}\mathcal{D}_{n}\right)\hat{\mathcal{D}}_{0}u_n\right]\\
        & + \varepsilon^2 \left[2\left(1 + 2\mathcal{D}_{1-n}\mathcal{D}_{n}\right)\hat{\mathcal{D}}_{1}\hat{\mathcal{D}}_{0}u_n + \mathcal{D}_{-1-n}\mathcal{D}_{-n}\hat{\mathcal{D}}_{-1}\hat{\mathcal{D}}_{0}u_n + \mathcal{D}_{n-1}\mathcal{D}_{n}\hat{\mathcal{D}}_{-1}\hat{\mathcal{D}}_{0}u_n\right] + \mathcal{O}(\varepsilon^3).\\
    \end{split}
\end{equation*}
For simplicity, we would like to express as many differential operators for $r$ ($R$) in terms of the Laplacian operator $\Delta_n = \mathcal{D}_{1-n}\mathcal{D}_{n}$ ($\hat{\Delta}_n = \hat{\mathcal{D}}_{1-n}\hat{\mathcal{D}}_{n}$) or just $\partial_{r} = \mathcal{D}_{0}$ ($\partial_{R} = \hat{\mathcal{D}}_{0}$). To this end, we write
\begin{equation*}
    \begin{split}
        \left(1 + \Delta_{n}\right)^2 u_n ={}& \left(1 + \Delta_{n}\right)^2 u_n + \varepsilon\,\left[4\partial_{r}\left(1 + \Delta_{n}\right)\partial_{R} u_n\right]\\
        & + \varepsilon^2 \left[2\left(1 + \Delta_{n}\right)\hat{\Delta}_{0}u_n + 2\Delta_{n}\hat{\mathcal{D}}_{1}\hat{\mathcal{D}}_{0}u_n + \mathcal{D}_{-1-n}\mathcal{D}_{-n}\hat{\mathcal{D}}_{-1}\hat{\mathcal{D}}_{0}u_n + \mathcal{D}_{n-1}\mathcal{D}_{n}\hat{\mathcal{D}}_{-1}\hat{\mathcal{D}}_{0}u_n\right] + \mathcal{O}(\varepsilon^3),\\
        ={}& \left(1 + \Delta_{n}\right)^2 u_n + \varepsilon\,\left[4\partial_{r}\left(1 + \Delta_{n}\right)\partial_{R} u_n\right] + \varepsilon^2 \left[2\left(1 + \Delta_{n}\right)\hat{\Delta}_{0}u_n + 4\Delta_{n}\partial_{R}^{2} u_n\right]\\
        & + \varepsilon^2 \left[\left(\mathcal{D}_{n-1} - \mathcal{D}_{1-n}\right)\mathcal{D}_{n} - \left(\mathcal{D}_{n+1} - \mathcal{D}_{-1-n}\right)\mathcal{D}_{-n}\right]\hat{\mathcal{D}}_{-1}\hat{\mathcal{D}}_{0}u_n + \mathcal{O}(\varepsilon^3),\\
        ={}& \left(1 + \Delta_{n}\right)^2 u_n + \varepsilon\,\left[4\partial_{r}\left(1 + \Delta_{n}\right)\partial_{R} u_n\right] + \varepsilon^2 \left[2\left(1 + \Delta_{n}\right)\hat{\Delta}_{0}u_n + 4\Delta_{n}\partial_{R}^{2} u_n\right] + \mathcal{O}(\varepsilon^3).\\
    \end{split}
\end{equation*}
where we have used the fact that $\mathcal{D}_{n} + \mathcal{D}_{-n} = 2\,\partial_r$ and $\mathcal{D}_{n} - \mathcal{D}_{-n} = \frac{2n}{r} = \varepsilon\, \frac{2n}{R}$ for all $n\in\mathbb{Z}$. Hence, we have arrived at the convenient asymptotic expansion
\begin{equation}\label{diff:asymp}
    \begin{split}
        \left(1 + \Delta_{n}\right)^2 u_n ={}& \left(1 + \Delta_{n}\right)^2 u_n + \varepsilon\,\left[4\partial_{r}\left(1 + \Delta_{n}\right)\partial_{R} u_n\right] + \varepsilon^2 \left[2\left(1 + \Delta_{n}\right)\hat{\Delta}_{0}u_n + 4\Delta_{n}\partial_{R}^{2} u_n\right] + \mathcal{O}(\varepsilon^3).\\
    \end{split}
\end{equation}

\begin{rmk}
    It is strange to see nonautonomous terms in a multiple-scale asymptotic expansion, and so we emphasise why we choose certain nonautonomous terms to be in terms of the ``fast'' variable $r$ rather than the ``slow'' variable $R$. 

Suppose we chose to write every nonautonomous term in terms of the slow variable $R$, so that the leading order differential operator becomes $(1+\partial_r^2)^2$. Applying the operator $(1+\partial_{r}^2)$ to $J_n(r)$, we note that
\begin{equation*}
    (1+\partial_r^2) J_{n}(r) = - \frac{1}{r}\partial_r J_{n}(r) + \frac{n^2}{r^2}J_{n}(r) = - \varepsilon\,\frac{1}{R}\partial_r J_{n}(r) + \varepsilon^2\,\frac{n^2}{R^2}J_{n}(r) 
\end{equation*}
and so we obtain additional higher order nonautonomous terms in our asymptotic expansion. These higher order terms will then combine with the other nonautonomous terms in our asymptotic expansion, which will require the application of various Bessel function identities in order to simplify. Hence, our choice of preserving certain ``fast'' nonautonomous terms is purely for convenience, as it allows us to present a simpler method for radial multiple-scale analysis in \S\ref{s:amplitude}.
\end{rmk}

%%%%%%%%%%%%%%%%%%%%%%%%%%%%%%%%%%%%%%%%%%%%
\subsection{Convolutional sums of Bessel functions}\label{ss:bessel-nonlin}

Having resolved the issue surrounding derivatives of Bessel functions, we now turn our attention towards the nonlinear terms in \eqref{e:SHE-n}. We present identities for the summation of convolutional products of Bessel functions, such that the indices $k_i\in\mathbb{Z}$ of each Bessel function sum up to a fixed index $n\in\mathbb{Z}$. As we already noted when deriving \eqref{e:SHE-n}, the projection of polynomial nonlinearities onto a given angular Fourier mode $\mathrm{e}^{\mathrm{i}n\theta}$ results in summations of exactly this form.\par

We begin with the following general result for combinations of Bessel functions,
\begin{prop}\label{prop:Besselsums}
    Fix $n\in\mathbb{Z}$, $m\in\mathbb{N}$, and $\mathbf{k}:=(k_1,k_2,\dots,k_{m})\in\mathbb{Z}^{m}$, with $|\mathbf{k}|:=k_1 + k_2 + \dots + k_m$. Then,
    \begin{equation*}
        \sum_{|\mathbf{k}|=n}\,\prod_{i=1}^{m} \mathrm{e}^{\mathrm{i} k_i\,\mathrm{arg}(\mathbf{x}_{i})}\,J_{k_{i}}(|\mathbf{x}_{i}|) = J_{n}\left(\left|\sum_{i=1}^{m} \mathbf{x}_{i}\right|\right)\,\mathrm{e}^{\mathrm{i}n\,\mathrm{arg}\left(\sum_{i=1}^{m} \mathbf{x}_{i}\right)}
    \end{equation*}
    holds for all $\mathbf{x}_{1},\mathbf{x}_{2},\dots,\mathbf{x}_{m}\in \mathbb{R}^{2}$, where the convolutional sum is taken over all $\mathbf{k}\in\mathbb{Z}^{m}$, $|\mathbf{x}|$ denotes the modulus of $\mathbf{x}$, and $\arg(\mathbf{x})$ denotes the `argument' of $\mathbf{x}$, i.e. the anticlockwise angle from the positive $x$-axis to the vector $\mathbf{x}$.
\end{prop}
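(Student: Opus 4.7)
The plan is to recognise the identity as a direct consequence of the Jacobi--Anger plane-wave expansion applied to each of the $m$ vectors $\mathbf{x}_i$ and also to their vector sum. I introduce an auxiliary angular parameter $\phi\in\mathbb{R}$ and the unit vector $\hat{\mathbf{e}}(\phi):=(\cos\phi,\sin\phi)$; for any $\mathbf{x}\in\mathbb{R}^2$ one has $\mathbf{x}\cdot\hat{\mathbf{e}}(\phi)=|\mathbf{x}|\cos(\arg(\mathbf{x})-\phi)$, so Jacobi--Anger gives
\begin{equation*}
\mathrm{e}^{\mathrm{i}\mathbf{x}\cdot\hat{\mathbf{e}}(\phi)} \;=\; \sum_{k\in\mathbb{Z}} \mathrm{i}^{k}\,J_{k}(|\mathbf{x}|)\,\mathrm{e}^{\mathrm{i}k(\arg(\mathbf{x})-\phi)}.
\end{equation*}
Because inner products are linear, the trivial identity $\prod_{i=1}^{m}\mathrm{e}^{\mathrm{i}\mathbf{x}_i\cdot\hat{\mathbf{e}}(\phi)} = \mathrm{e}^{\mathrm{i}(\sum_i\mathbf{x}_i)\cdot\hat{\mathbf{e}}(\phi)}$ holds for every $\phi$. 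The entire proof will consist of expanding both sides using the display above and then matching Fourier coefficients in $\phi$.

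Explicitly, I would expand the left-hand product as
\begin{equation*}
\prod_{i=1}^{m}\mathrm{e}^{\mathrm{i}\mathbf{x}_i\cdot\hat{\mathbf{e}}(\phi)} \;=\; \sum_{\mathbf{k}\in\mathbb{Z}^{m}} \mathrm{i}^{|\mathbf{k}|}\biggl(\prod_{i=1}^{m} J_{k_i}(|\mathbf{x}_i|)\,\mathrm{e}^{\mathrm{i}k_i\arg(\mathbf{x}_i)}\biggr)\mathrm{e}^{-\mathrm{i}|\mathbf{k}|\phi},
\end{equation*}
and then regroup the sum by the total index $n:=|\mathbf{k}|$, so that the coefficient of $\mathrm{e}^{-\mathrm{i}n\phi}$ in the resulting Fourier series is $\mathrm{i}^{n}$ times the left-hand side of the proposition. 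Applying the Jacobi--Anger formula once more to $\mathbf{y}:=\sum_{i=1}^{m}\mathbf{x}_i$ on the other side produces $\mathrm{i}^{n}J_{n}(|\mathbf{y}|)\mathrm{e}^{\mathrm{i}n\arg(\mathbf{y})}$ as the corresponding coefficient of $\mathrm{e}^{-\mathrm{i}n\phi}$. Matching the two series term by term in $\phi$ and dividing through by $\mathrm{i}^{n}$ yields the claim.

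The main technical point, rather than a real obstacle, is the absolute convergence needed both to reorder the nested sum over $\mathbf{k}\in\mathbb{Z}^{m}$ and to read off Fourier coefficients uniquely. This follows from the standard bound $|J_k(z)|\le (|z|/2)^{|k|}/|k|!$ for large $|k|$ together with $J_{-k}=(-1)^{k}J_{k}$, which makes the multi-indexed series absolutely and uniformly convergent in $\phi$ on compact $\mathbf{x}_i$-sets; Fubini then permits the regrouping by $|\mathbf{k}|=n$, and uniqueness of Fourier coefficients of a continuous $2\pi$-periodic function closes the argument. The algebraic heart of the proof is simply the additivity of the dot product, and an alternative inductive proof on $m$ using Graf's addition formula as the base case $m=2$ would encode the same mechanism in $m-1$ steps.
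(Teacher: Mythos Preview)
Your proof is correct and follows essentially the same route as the paper: both use the Jacobi--Anger expansion for $\mathrm{e}^{\mathrm{i}\mathbf{x}\cdot\hat{\mathbf{e}}(\phi)}$, exploit the additivity $\prod_i \mathrm{e}^{\mathrm{i}\mathbf{x}_i\cdot\hat{\mathbf{e}}(\phi)}=\mathrm{e}^{\mathrm{i}(\sum_i\mathbf{x}_i)\cdot\hat{\mathbf{e}}(\phi)}$, and then extract the $n$-th Fourier coefficient in $\phi$ --- the paper does this last step by integrating against $\mathrm{e}^{\mathrm{i}n\varphi}$ via the Hansen--Bessel formula, while you phrase it as direct coefficient matching, but the mechanism is identical. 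Your attention to absolute convergence is a welcome addition that the paper leaves implicit.
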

\begin{proof}
We begin with the Jacobi--Anger expansion \cite[(9.1.41)]{AbramowitzStegun1948Handbook}
\begin{equation}\label{ident:Jacobi-Anger}
    \mathrm{e}^{\mathrm{i}r\cos(\theta)} = \sum_{k\in\mathbb{Z}} J_{k}(r)\,\mathrm{e}^{\mathrm{i}k(\theta + \frac{\pi}{2})}
\end{equation}
and note that, by projecting onto the $n$-th angular Fourier mode, we recover the following result
\begin{equation}\label{ident:Hansen-Bessel}
    \frac{1}{2\pi}\int_0^{2\pi}\mathrm{e}^{\mathrm{i}(r\cos(\theta)-n(\theta+\frac{\pi}{2}))}\,\mathrm{d}\theta = J_{n}(r),
\end{equation}
which is known as the \emph{Hansen--Bessel formula}. For $\mathbf{x},\mathbf{y}\in\mathbb{R}^{2}$ it follows from \eqref{ident:Jacobi-Anger} that
\begin{equation}\label{ident:Jacobi-Anger;gen}
    \mathrm{e}^{\mathrm{i}\mathbf{y}\cdot\mathbf{x}} = \mathrm{e}^{\mathrm{i}|\mathbf{y}||\mathbf{x}|\cos(\mathrm{arg}(\mathbf{x}) - \mathrm{arg}(\mathbf{y}))} = \sum_{k\in\mathbb{Z}} J_{k}(|\mathbf{y}|\,|\mathbf{x}|)\,\mathrm{e}^{\mathrm{i} k\,(\mathrm{arg}(\mathbf{x}) - \mathrm{arg}(\mathbf{y}) + \frac{\pi}{2})}.
\end{equation}
Setting $\mathbf{y}(\varphi)=(\cos(\varphi+\frac{\pi}{2}),\sin(\varphi+\frac{\pi}{2}))^{T}$ for $\varphi\in[0,2\pi)$, we obtain 
\begin{equation*}
    \frac{1}{2\pi}\int_{0}^{2\pi}\mathrm{e}^{\mathrm{i}\left( n \varphi + \mathbf{y}(\varphi)\cdot\mathbf{x}\right)} \,\mathrm{d}\varphi = \sum_{k\in\mathbb{Z}} \left(\frac{1}{2\pi}\int_{0}^{2\pi}\mathrm{e}^{\mathrm{i}(n-k)\varphi} \,\mathrm{d}\varphi\right) J_{k}(|\mathbf{x}|)\mathrm{e}^{\mathrm{i} k\,\mathrm{arg}(\mathbf{x})} = J_{n}(|\mathbf{x}|)\,\mathrm{e}^{\mathrm{i} n\,\mathrm{arg}(\mathbf{x})}
\end{equation*}
and we can thus derive the \textit{generalised Hansen--Bessel formula}
\begin{equation}\label{ident:Hansen-Bessel;gen}
    \frac{1}{2\pi}\int_{0}^{2\pi}\mathrm{e}^{\mathrm{i} n \varphi }\,\mathrm{e}^{\mathrm{i}(\cos(\varphi+\frac{\pi}{2}),\sin(\varphi+\frac{\pi}{2}))^{T}\cdot\mathbf{x}} \,\mathrm{d}\varphi = J_{n}(|\mathbf{x}|)\,\mathrm{e}^{\mathrm{i} n\,\mathrm{arg}(\mathbf{x})}.
\end{equation}
Using \eqref{ident:Jacobi-Anger;gen} and \eqref{ident:Hansen-Bessel;gen}, we obtain
\begin{equation*}
    \begin{split}
J_{n}\left(\left|\sum_{i=1}^{m}\mathbf{x}_{i}\right|\right)\,\mathrm{e}^{\mathrm{i} n\,\mathrm{arg}\left(\sum_{i=1}^{m}\mathbf{x}_{i}\right)} ={}& \frac{1}{2\pi}\int_{0}^{2\pi}\mathrm{e}^{\mathrm{i} n \varphi } \mathrm{e}^{\mathrm{i}(\cos(\varphi+\frac{\pi}{2}),\sin(\varphi+\frac{\pi}{2}))^{T}\cdot\sum_{i=1}^{m}\mathbf{x}_{i}} \,\mathrm{d}\varphi\\
        ={}& \frac{1}{2\pi}\int_{0}^{2\pi}\mathrm{e}^{\mathrm{i}n \varphi}\, \prod_{i=1}^{m} \mathrm{e}^{\mathrm{i}(\cos(\varphi+\frac{\pi}{2}),\sin(\varphi+\frac{\pi}{2}))\cdot\mathbf{x}_{i}} \,\mathrm{d}\varphi\\
        ={}& \frac{1}{2\pi}\int_{0}^{2\pi}\mathrm{e}^{\mathrm{i}n \varphi}\, \prod_{i=1}^{m} \sum_{\mathbf{k}\in\mathbb{Z}^{m}} J_{k_i}(|\mathbf{x}|)\,\mathrm{e}^{\mathrm{i} k_i\,(\mathrm{arg}(\mathbf{x}_i) - \varphi)} \,\mathrm{d}\varphi\\
        ={}& \sum_{\mathbf{k}\in\mathbb{Z}^{m}} \left(\frac{1}{2\pi}\int_{0}^{2\pi}\mathrm{e}^{\mathrm{i}\left(n - \sum_{i=1}^{m}k_i\right)\varphi}\,\mathrm{d}\varphi\right) \, \prod_{i=1}^{m}  J_{k_i}(|\mathbf{x}|)\,\mathrm{e}^{\mathrm{i} k_i\,\mathrm{arg}(\mathbf{x}_i)} \\
        ={}& \sum_{|\mathbf{k}|=n} \,\prod_{i=1}^{m}  J_{k_i}(|\mathbf{x}|)\,\mathrm{e}^{\mathrm{i} k_i\,\mathrm{arg}(\mathbf{x}_i)}.\\
    \end{split}
\end{equation*}
\end{proof}
\begin{rmk}
    The convolutional identity presented in Proposition \ref{prop:Besselsums} is a generalisation of Graf's addition theorem (which can be found in Watson \cite[(2-3) in \S\.11.3]{watson1944bessel}), written in vector notation as
    \begin{align}
        \sum_{i+j=n} \mathrm{e}^{\mathrm{i}\left(i\,\mathrm{arg}(\mathbf{x}) + j\,\mathrm{arg}(\mathbf{y})\right)} J_{i}(|\mathbf{x}|) J_{\pm j}(|\mathbf{y}|) ={}& J_{n}(|\mathbf{x}\pm\mathbf{y}|) \,\mathrm{e}^{\mathrm{i}n\,\mathrm{arg}(\mathbf{x}\pm\mathbf{y})},\label{Bessel-gen:1}
    \end{align}
    for any $\mathbf{x},\mathbf{y}\in\mathbb{R}^2$.
\end{rmk}
In particular, fixing $j\in[1,m]$ and taking $\mathbf{x}_{i}=(-r,0)^{T}$ for $i\in[1, j]$ and $\mathbf{x}_{i}=(r,0)^{T}$ for $i\in[j+1,m]$, we obtain
    \begin{equation}
            J_{n}((m-2j)r) = \sum_{|\mathbf{k}|=n}\; \prod_{i=j+1}^{m}J_{k_{i}}(r)\prod_{\ell=1}^{j} J_{-k_{\ell}}(r).\label{ident:Besselsums-n}
    \end{equation}

\begin{rmk}
    Each convolutional sum can also be written with nested convolutional sums, and we note the following examples
\begin{subequations}\label{ident:Besselsums}
\begin{align}
    \delta_{n,0} ={}& \sum_{i+j=n}\; J_{i}(r) J_{-j}(r),\label{ident:Besselsums-0}\\
    J_{n}(r) ={}& \sum_{i+j+k=n}\; J_{i}(r)J_{j}(r)J_{-k}(r) = \sum_{i+j=n}\; J_{i}(2r)J_{-j}(r),\label{ident:Besselsums-1}\\
    J_{n}(2r) ={}& \sum_{i+j=n}\; J_{i}(r) J_{j}(r),\label{ident:Besselsums-2}\\
    J_{n}(3r) ={}& \sum_{i+j+k=n}\; J_{i}(r)J_{j}(r)J_{k}(r) = \sum_{i+j=n}\; J_{i}(2r)J_{j}(r)\label{ident:Besselsums-3}
\end{align}    
\end{subequations}
which we will use presently. Here, $\delta_{n,0}$ denotes the standard Kronecker delta, which satisfies $\delta_{i,j}=1$ if $i=j$ and $\delta_{i,j}=0$ otherwise. Clearly each summation is symmetric over $i,j,k\in\mathbb{Z}$, and so one can also permute the summation indices to obtain additional formulae.
\end{rmk}

The identities stated in \eqref{ident:Besselsums} are inherently connected to the Bessel expansion of the complex function
\begin{equation}\label{defn:u-rolls}
    u_{S}(r,\theta):= \mathrm{e}^{\mathrm{i}r\cos(\theta)} = \sum_{k\in\mathbb{Z}} \mathrm{i}^{k} J_{k}(r)\,\mathrm{e}^{\mathrm{i}k\theta}
\end{equation}
which we call a \emph{stripe} pattern, as plotted in Figure~\ref{fig:profiles}(a). We can also write down Bessel expansions for other two-dimensional patterns, which then allow us to derive additional nonlinear Bessel identities similar to \eqref{ident:Besselsums}. Due to the increased complexity of these identities, we restrict to only the quadratic and cubic cases.

%% Figure 3: Plots of dihedral patterns considered
\begin{figure}[t!]
    \centering
    \includegraphics[width=0.9\linewidth]{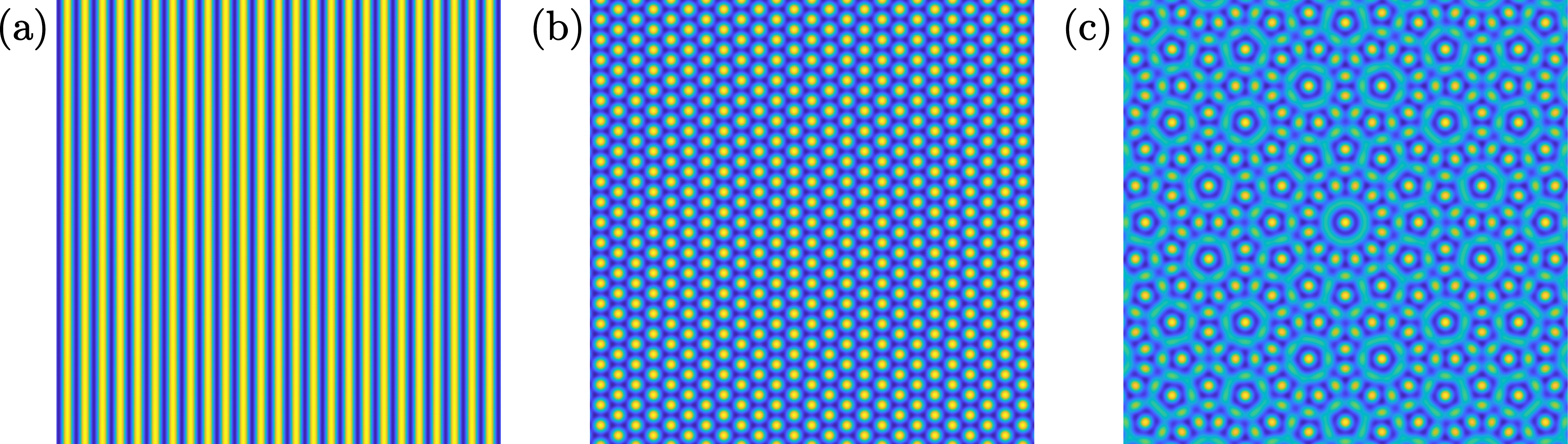}
    \caption{Surface plots of the real part of (a) $u_{S}$, (b) $u_{H}$, and (c) $u_{Q}$, defined in \eqref{defn:u-rolls}, \eqref{defn:u-hexagons}, and \eqref{defn:u-quasi}, respectively}
    \label{fig:profiles}
\end{figure}

We consider the following function 
\begin{equation}\label{defn:u-hexagons}
    u_{H}(r,\theta):= \frac{1}{3}\left(\mathrm{e}^{\mathrm{i}r\cos(\theta)} + \mathrm{e}^{\mathrm{i}r\cos(\theta + \frac{2\pi}{3})} + \mathrm{e}^{\mathrm{i}r\cos(\theta-\frac{2\pi}{3})} \right)
\end{equation}
which we refer to as a \emph{hexagon} pattern, as plotted in Figure~\ref{fig:profiles}(b). We note that $u_{H}$ has the following Bessel expansion
\begin{equation*}
    u_{H}(r,\theta) = \frac{1}{3}\sum_{k\in\mathbb{Z}} \mathrm{i}^{k} (1 + 2\cos(\tfrac{2k\pi}{3}))J_{k}(r)\,\mathrm{e}^{\mathrm{i}k\theta} = \sum_{k\in\mathbb{Z}} \mathrm{i}^{3k} J_{3k}(r)\,\mathrm{e}^{\mathrm{i}3k\theta}
\end{equation*}
and so we can derive the following result,
\begin{prop}\label{prop:hex}
Fix $n\in\mathbb{Z}$. Then,
\begin{equation}\label{ident:hex;Besselsums}
    \begin{split}
        \sum_{i+j=n} J_{3i}(r) J_{3j}(r) ={}& \frac{2}{3} J_{-3n}(r) + \frac{1}{3}J_{3n}(2r)\\
        \sum_{i+j=n} J_{3i}(r) J_{-3j}(r) ={}& \frac{1}{3}\delta_{n,0} + f_{1}(\sqrt{3}\,r)\\
        \sum_{i+j+k=n} J_{3i}(r) J_{3j}(r)J_{3k}(r) ={}& \frac{2}{9}\delta_{n.0} + \frac{1}{9} J_{3n}(3r) + f_{2}(\sqrt{3}\,r)\\
        \sum_{i+j+k=n} J_{3i}(r) J_{3j}(r)J_{-3k}(r) ={}& \frac{5}{9}J_{3n}(r) + \frac{2}{9} J_{-3n}(2r) + f_{3}(\sqrt{7}\,r)\\
    \end{split}
\end{equation}
where we collect terms with irrational frequencies $\omega_1,\dots,\omega_n$ into arbitrary functions $f_{i}(\omega_1\,r,\dots,\omega_n\,r)$ for $i=1,2,3$. 
\end{prop}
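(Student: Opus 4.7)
The plan is to compute each of the products $u_H^2$, $u_H\overline{u_H}$, $u_H^3$, and $u_H^2\overline{u_H}$ in two different ways and match their $\mathrm{e}^{\mathrm{i}3n\theta}$ Fourier coefficients. On the Bessel side, the expansion derived in the excerpt gives
\begin{equation*}
u_H(r,\theta) = \sum_{k\in\mathbb{Z}} \mathrm{i}^{3k} J_{3k}(r)\, \mathrm{e}^{\mathrm{i}3k\theta}, \qquad \overline{u_H}(r,\theta) = \sum_{k\in\mathbb{Z}} \mathrm{i}^{3k} J_{-3k}(r)\, \mathrm{e}^{\mathrm{i}3k\theta},
\end{equation*}
where the second identity follows by conjugation and the reindexing $k\mapsto -k$. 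A Cauchy product then shows that the $\mathrm{e}^{\mathrm{i}3n\theta}$ coefficient of each of the four monomials is exactly $\mathrm{i}^{3n}$ times the convolutional sum on the corresponding left-hand side of \eqref{ident:hex;Besselsums}.

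On the plane-wave side, we substitute $u_H = \tfrac{1}{3}\sum_{\ell=1}^{3}\mathrm{e}^{\mathrm{i}\mathbf{k}_\ell\cdot\mathbf{x}}$ with $\mathbf{k}_\ell$ the three unit vectors at angles $0,\pm 2\pi/3$, distribute each product, and classify the resulting plane waves $\mathrm{e}^{\mathrm{i}\mathbf{K}\cdot\mathbf{x}}$ by the magnitude of $\mathbf{K}=\sum_i\pm\mathbf{k}_{\ell_i}$. The cancellation $\mathbf{k}_1+\mathbf{k}_2+\mathbf{k}_3=\mathbf{0}$ together with elementary magnitude computations show that $|\mathbf{K}|$ takes only the values $\{1,2\}$ for $u_H^2$, $\{0,\sqrt{3}\}$ for $u_H\overline{u_H}$, $\{0,\sqrt{3},3\}$ for $u_H^3$, and $\{1,2,\sqrt{7}\}$ for $u_H^2\overline{u_H}$. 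For each nonzero resultant, the generalised Jacobi--Anger formula \eqref{ident:Jacobi-Anger;gen} gives an $\mathrm{e}^{\mathrm{i}3n\theta}$ coefficient equal to $\mathrm{i}^{3n}J_{3n}(|\mathbf{K}|r)\,\mathrm{e}^{-\mathrm{i}3n\arg\mathbf{K}}$, while the contributions from $\mathbf{K}=\mathbf{0}$ reduce to $\delta_{n,0}$.

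A key simplification is that every rational-magnitude resultant that appears points along $\pm\mathbf{k}_\ell$ for some $\ell$, so $\arg\mathbf{K}\in\tfrac{\pi}{3}\mathbb{Z}$ and the phase $\mathrm{e}^{-\mathrm{i}3n\arg\mathbf{K}}$ collapses to $1$ (for $+\mathbf{k}_\ell$) or $(-1)^{n}$ (for $-\mathbf{k}_\ell$). Combined with the parity identity $J_{-3n}(r)=(-1)^{n}J_{3n}(r)$, this turns each rational-magnitude class into a single rational multiple of $J_{\pm 3n}(|\mathbf{K}|r)$ or of $\delta_{n,0}$, matching the explicit terms stated in \eqref{ident:hex;Besselsums}. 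The irrational-magnitude contributions, from $|\mathbf{K}|=\sqrt{3}$ or $|\mathbf{K}|=\sqrt{7}$, are packaged into the placeholder functions $f_1,f_2,f_3$. Equating the two Fourier expansions for each monomial and cancelling the common factor $\mathrm{i}^{3n}/3^{p+q}$ then yields the four identities.

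The main obstacle is the combinatorial bookkeeping in the cubic cases, where each of $u_H^3$ and $u_H^2\overline{u_H}$ distributes into $27$ plane waves that must be partitioned by resultant magnitude and direction. The $D_6$ symmetry of the three-vector configuration and the triangle identity $\mathbf{k}_1+\mathbf{k}_2+\mathbf{k}_3=\mathbf{0}$ reduce this to only a handful of distinct cases per monomial; the resulting multiplicities, divided by $27$ in the cubic cases and by $9$ in the quadratic cases, explain the rational coefficients $\tfrac{1}{9},\tfrac{2}{9},\tfrac{1}{3},\tfrac{2}{3},\tfrac{5}{9}$ appearing on the right-hand sides of \eqref{ident:hex;Besselsums}.
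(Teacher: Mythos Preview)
Your proposal is correct and follows essentially the same route as the paper's own proof: form the products $u_H^{\,p}\,\overline{u_H}^{\,q}$, expand the plane-wave side as a sum of exponentials $\mathrm{e}^{\mathrm{i}\mathbf{K}\cdot\mathbf{x}}$ (equivalently $\mathrm{e}^{\mathrm{i}Cr\cos(\theta+\phi)}$ in the paper's trigonometric phrasing), project onto the $\mathrm{e}^{\mathrm{i}3n\theta}$ mode via the Jacobi--Anger/Hansen--Bessel formula, and match against the Bessel-side Cauchy product. The paper merely sketches this and omits the bookkeeping; your explicit magnitude classification and the observation that all rational-length resultants lie along $\pm\mathbf{k}_\ell$ (so that the phase factor collapses to $1$ or $(-1)^n$) is exactly the computation the paper leaves to the reader.
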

The proof of Proposition~\ref{prop:hex} (as well as subsequent propositions) follows a standard approach. Taking products of $\{u_{H},\overline{u_{H}}\}$, one obtains terms of the form $\mathrm{exp}(\mathrm{i}r[c_1\cos(\theta + m_1) + c_2\cos(\theta + m_2) + c_3 \cos(\theta + m_3)])$ for $c_i\in\{-1,0,1\}$, $m_i\in\{0,\frac{2\pi}{3}, - \frac{2\pi}{3}\}$. Using trigonometric identities, each term can then be expressed as $\mathrm{exp}(\mathrm{i}C r\cos(\theta + \phi))$, where $C,\phi$ can be found explicitly in terms of $\{c_1,c_2,c_3, m_1, m_2, m_3\}$. Following a projection onto the $n$-th angular Fourier mode $\mathrm{e}^{\mathrm{i}n\theta}$ and an application of the Hansen--Bessel formula \eqref{ident:Hansen-Bessel}, one obtains the results stated in \eqref{ident:hex;Besselsums}. The details are simple but laborious, and so we omit them from this work.

We next apply a translation $(x,y)\mapsto(x, y + \frac{2\pi}{\sqrt{3}})$ to \eqref{defn:u-hexagons}, resulting in 
\begin{equation}\label{defn:u-rhomb}
    u_{R}(r,\theta):= \frac{1}{3}\left(\mathrm{e}^{\mathrm{i}r\cos(\theta)} - \mathrm{e}^{\mathrm{i}r\cos(\theta + \frac{2\pi}{3})} - \mathrm{e}^{\mathrm{i}r\cos(\theta-\frac{2\pi}{3})} \right)
\end{equation}
which breaks the $\mathbb{D}_{6}$ symmetry of $u_{H}$. The resultant pattern is a hexagonal lattice with a depression at the origin; we note that $\mathbb{D}_{2}$ symmetry is preserved, and so we refer to $u_{R}$ as a \emph{rhombic} pattern. For the Bessel expansion of $u_{R}$, we find that
\begin{equation*}
    \begin{split}
    u_{R}(r,\theta)={} \sum_{k\in\mathbb{Z}} \mathrm{i}^{k}a_{k}J_{k}(r)\,\mathrm{e}^{\mathrm{i}k\theta},
    \end{split}
\end{equation*}
where $a_n:= \frac{1}{3}\left(1 -2 \cos(\tfrac{2k\pi}{3})\right)$,
and we obtain the following result,
\begin{prop}\label{prop:rhom}
Fix $n\in\mathbb{Z}$. Then,
\begin{equation}\label{ident:rhombic;Besselsums}
    \begin{split}
        \sum_{i+j=n} a_{i} a_{j} J_{i}(r) J_{j}(r) ={}& \frac{2}{3} a_{n} J_{-n}(r) + \frac{1}{9}\left(1 + 2\cos\left(\tfrac{2n\pi}{3}\right)\right)J_{n}(2r)\\
        \sum_{i+j=n} a_{i} a_{j} J_{i}(r) J_{-j}(r) ={}&\frac{1}{3}\delta_{n,0} + f_{1}(\sqrt{3}\,r)\\
        \sum_{i+j+k=n} a_{i} a_{j} a_{k} J_{i}(r) J_{j}(r) J_{k}(r) ={}&\frac{2}{9}\delta_{n.0} + \frac{1}{9} a_{n} J_{n}(3r) + f_{2}(\sqrt{3}\,r)\\
        \sum_{i+j+k=n} a_{i} a_{j} a_{k} J_{i}(r) J_{j}(r) J_{-k}(r) ={}& \frac{5}{9} a_{n} J_{n}(r) + \frac{2}{27}\left(1 + 2\cos(\tfrac{2n\pi}{3})\right)J_{-n}(2r) + f_{3}(\sqrt{7}\,r)\\
    \end{split}
\end{equation}
where we collect terms with irrational frequencies $\omega_1,\dots,\omega_n$ into arbitrary functions $f_{i}(\omega_1\,r,\dots,\omega_n\,r)$ for $i=1,2,3$. 
\end{prop}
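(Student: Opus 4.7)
The strategy mirrors the one the authors describe for Proposition~\ref{prop:hex}. Since $u_R$ admits the Bessel expansion $u_R(r,\theta)=\sum_{k\in\mathbb{Z}}\mathrm{i}^{k}a_{k}J_{k}(r)\mathrm{e}^{\mathrm{i}k\theta}$, the four left-hand sides of \eqref{ident:rhombic;Besselsums} are, up to a uniform factor of $\mathrm{i}^{-n}$, precisely the $n$-th angular Fourier coefficients of $u_{R}^{2}$, $u_{R}\overline{u_{R}}$, $u_{R}^{3}$, and $u_{R}^{2}\overline{u_{R}}$ respectively. I would therefore evaluate each of those four Fourier coefficients directly from the definition \eqref{defn:u-rhomb} and match them against the claimed right-hand sides.

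Each product $u_R^p\overline{u_R}^q$ expands, by distributing over the three summands of \eqref{defn:u-rhomb}, into a finite sum of terms of the form
\[
\frac{\epsilon}{3^{p+q}}\exp\!\left(\mathrm{i}r\sum_{\ell=1}^{p+q}c_\ell\cos(\theta+m_\ell)\right),\qquad c_\ell\in\{-1,+1\},\quad m_\ell\in\{0,\tfrac{2\pi}{3},-\tfrac{2\pi}{3}\},
\]
where $c_\ell$ records whether the $\ell$-th factor was $u_R$ or $\overline{u_R}$, and $\epsilon\in\{\pm 1\}$ is the product of the external signs carried over from the minus signs on the two non-trivial exponentials in \eqref{defn:u-rhomb}. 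Applying the standard sum-to-product identities repeatedly collapses the inner sum of cosines into a single $C\cos(\theta+\phi)$, where for $p+q=2$ the amplitude satisfies $C\in\{0,1,\sqrt{3},2\}$ and for $p+q=3$ it satisfies $C\in\{0,1,\sqrt{3},\sqrt{7},2,3\}$, with $\phi$ explicit in each case. Projection onto $\mathrm{e}^{\mathrm{i}n\theta}$ via the Hansen--Bessel formula \eqref{ident:Hansen-Bessel} then turns each such term into an explicit scalar multiple of $J_n(Cr)$.

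The final step is to collect the contributions by frequency: the rational values $C\in\{0,1,2,3\}$ produce the explicit $\delta_{n,0}$, $J_n(r)$, $J_{\pm n}(2r)$, and $J_n(3r)$ terms on the right-hand sides of \eqref{ident:rhombic;Besselsums}, while the irrational values $C\in\{\sqrt{3},\sqrt{7}\}$ are absorbed into the unspecified functions $f_1,f_2,f_3$. The one genuine obstacle is sign bookkeeping: because \eqref{defn:u-rhomb} has the sign pattern $(+,-,-)$ rather than the $(+,+,+)$ of $u_H$, each of the roughly $3^{p+q}$ monomials produced carries the extra factor $\epsilon=(-1)^{\#\{\ell\,:\,m_\ell\ne 0\}}$, which must be tracked carefully across the expansion. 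Summing the contributions at each admissible frequency and simplifying using the elementary identity $(-1)^{n}\cos(n\pi/3)=\cos(2n\pi/3)$ reproduces precisely the coefficients $a_n$ and $\tfrac{1}{9}(1+2\cos(\tfrac{2n\pi}{3}))$ appearing in \eqref{ident:rhombic;Besselsums}, in place of the cleaner hexagonal coefficients of \eqref{ident:hex;Besselsums}. The individual algebraic steps are elementary and, as the authors remark for Proposition~\ref{prop:hex}, easily automated, so a by-hand derivation can reasonably be omitted.
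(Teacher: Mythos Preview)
Your proposal is correct and follows precisely the approach the paper prescribes: the authors explicitly state that Proposition~\ref{prop:rhom} (and the other ``subsequent propositions'') is proved by the same method as Proposition~\ref{prop:hex}, namely expanding products of $u_R$ and $\overline{u_R}$ from the definition \eqref{defn:u-rhomb}, collapsing each exponent via trigonometric identities into $\mathrm{e}^{\mathrm{i}Cr\cos(\theta+\phi)}$, and projecting onto $\mathrm{e}^{\mathrm{i}n\theta}$ via the Hansen--Bessel formula. Your additional remarks on the $(+,-,-)$ sign bookkeeping and the identity $(-1)^{n}\cos(n\pi/3)=\cos(2n\pi/3)$ correctly isolate the only substantive difference from the hexagonal case.
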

Finally, we consider the superposition of two copies of $u_{H}$ rotated by an angle of $\frac{\pi}{12}$ and $-\frac{\pi}{12}$, respectively, which we write as
\begin{equation}\label{defn:u-quasi}
    u_{Q}(r,\theta):= \frac{1}{6}\left(\mathrm{e}^{\mathrm{i}r\cos(\theta + \frac{\pi}{12})} - \mathrm{e}^{\mathrm{i}r\cos(\theta + \frac{9\pi}{12})} - \mathrm{e}^{\mathrm{i}r\cos(\theta-\frac{7\pi}{12})} + \mathrm{e}^{\mathrm{i}r\cos(\theta-\frac{\pi}{12})} - \mathrm{e}^{\mathrm{i}r\cos(\theta + \frac{7\pi}{12})} - \mathrm{e}^{\mathrm{i}r\cos(\theta-\frac{9\pi}{12})} \right).
\end{equation}
The function $u_{Q}$ is a special case of the arbitrarily rotated hexagon lattices considered in \cite{IoossRucklidge2022RotatedHexagons} and forms a quasipattern with $\mathbb{D}_{12}$ symmetry, as plotted in Figure~\ref{fig:profiles}(c). The \emph{quasipattern} $u_{Q}$ has the following Bessel expansion
\begin{equation*}
    \begin{split}
        u_{Q}(r,\theta) = \sum_{k\in\mathbb{Z}} \mathrm{i}^{3k} b_k J_{3k}(r)\,\mathrm{e}^{\mathrm{i}3k\theta}
    \end{split}
\end{equation*}
where $b_k:=\cos(\frac{k\pi}{4})$, and so we derive our final set of identities,
\begin{prop}\label{prop:D12}
Fix $n\in\mathbb{Z}$. Then,
\begin{equation}\label{ident:D12;Besselsums}
    \begin{split}
        \sum_{i+j=n} b_{i} b_{j} J_{3i}(r) J_{3j}(r) 
        ={}& \frac{1}{3}b_{n}J_{-3n}(r) + \frac{1}{6} b_{n}J_{3n}(2r) + f_{1}\left(\sqrt{2}\,r,\sqrt{2\pm\sqrt{3}}\,r\right)\\
        \sum_{i+j=n} b_{i} b_{j} J_{3i}(r) J_{-3j}(r) 
        ={}& \frac{1}{6}\delta_{n,0} + f_{2}\left(\sqrt{2}\,r, \sqrt{3}\,r, \sqrt{2\pm\sqrt{3}}\,r\right)\\
        \sum_{i+j+k=n} b_{i} b_{j} b_{k} J_{3i}(r) J_{3j}(r) J_{3k}(r) ={}& \frac{1}{18}\delta_{n,0} + \frac{1}{36}b_n J_{3n}(3 r) + f_{3}\left(\sqrt{2}\,r,\sqrt{3}\,r,\sqrt{2\pm\sqrt{3}}\,r\right)\\
        & + f_{4}\left(\sqrt{5}\,r,\sqrt{5\pm2\sqrt{3}}\,r\right) \\
        \sum_{i+j+k=n} b_{i} b_{j} b_{k} J_{3i}(r) J_{3j}(r) J_{-3k}(r) ={}& \frac{11}{36} b_{n} J_{3n}(r) + \frac{1}{18}b_n J_{-3n}(2r) + {\textstyle f_{5}\left(\sqrt{2}\,r, \sqrt{2\pm\sqrt{3}}\,r\right)}\\
        & + {\textstyle f_{6}\left(\sqrt{5}\,r,\sqrt{7}\,r,\sqrt{4\pm2\sqrt{3}}\,r,\sqrt{4\pm\sqrt{3}}\,r,\sqrt{5\pm2\sqrt{3}}\,r\right)} \\
    \end{split}
\end{equation}
where we collect terms with irrational frequencies $\omega_1,\dots,\omega_n$ into arbitrary functions $f_{i}(\omega_1\,r,\dots,\omega_n\,r)$ for $i=1,\dots,6$. 
\end{prop}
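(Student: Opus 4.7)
The argument follows the same scheme as that of Propositions \ref{prop:hex} and \ref{prop:rhom}, but the bookkeeping is considerably heavier because $u_Q$ is built from six plane waves rather than three. The key observation is that each of the four sums in \eqref{ident:D12;Besselsums} is, up to the global phase $\mathrm{i}^{3n}$, the coefficient of $\mathrm{e}^{\mathrm{i}3n\theta}$ in one of the products $u_Q^2$, $u_Q\overline{u_Q}$, $u_Q^3$ and $u_Q^2\overline{u_Q}$; this is immediate from convolving the Bessel expansion of $u_Q$ with itself. Starting instead from the plane-wave form $u_Q(r,\theta)=\frac{1}{6}\sum_{\ell=1}^{6}\mathrm{e}^{\mathrm{i}r\cos(\theta+\beta_\ell)}$ with $\beta_\ell\in\{\pm\pi/12,\pm 7\pi/12,\pm 3\pi/4\}$, each such product expands into a finite sum of terms of the form $\mathrm{e}^{\mathrm{i}r\sum_i\eta_i\cos(\theta+\beta_{\ell_i})}$, where $\eta_i=+1$ for $u_Q$ factors and $\eta_i=-1$ for $\overline{u_Q}$ factors.

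Writing $\mathbf{v}_\ell=(\cos\beta_\ell,\sin\beta_\ell)$ and applying the sum-to-product identity collapses each inner sum:
\begin{equation*}
\sum_{i}\eta_i\cos(\theta+\beta_{\ell_i})=C\cos(\theta+\phi),\qquad C=\Bigl|\sum_i\eta_i\mathbf{v}_{\ell_i}\Bigr|,\qquad \phi=\arg\Bigl(\sum_i\eta_i\mathbf{v}_{\ell_i}\Bigr),
\end{equation*}
so each summand becomes $\mathrm{e}^{\mathrm{i}rC\cos(\theta+\phi)}$; projecting this onto the $\mathrm{e}^{\mathrm{i}3n\theta}$ mode via the Hansen--Bessel formula \eqref{ident:Hansen-Bessel} produces $\mathrm{i}^{3n}J_{3n}(Cr)\mathrm{e}^{\mathrm{i}3n\phi}$, and equating coefficients yields the four identities. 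Because $\cos(\beta_\ell-\beta_m)$ takes values only in $\{\pm 1,\pm\tfrac12,\pm\tfrac{\sqrt3}{2},\pm\tfrac{\sqrt6\pm\sqrt2}{4}\}$, every $C^2=\sum_{i,j}\eta_i\eta_j\cos(\beta_{\ell_i}-\beta_{\ell_j})$ lies in $\mathbb{Z}[\sqrt3]$, so the frequencies split cleanly into rational values drawn from $\{0,1,2,3\}$—producing the explicit $\delta_{n,0}$ and $J_{\pm 3n}(Cr)$ terms—and the finite irrational list $\{\sqrt2,\sqrt3,\sqrt5,\sqrt7,\sqrt{2\pm\sqrt3},\sqrt{4\pm\sqrt3},\sqrt{4\pm2\sqrt3},\sqrt{5\pm2\sqrt3}\}$, which by definition is absorbed into $f_1,\dots,f_6$. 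The factor $b_n=\cos(n\pi/4)$ reappears in each rational coefficient because the weighted sum $\sum\mathrm{e}^{\mathrm{i}3n\phi}$ over tuples producing a fixed rational $C$ reduces, by the reflection $\beta\mapsto-\beta$ and $2\pi/3$-rotation symmetries of $\{\beta_\ell\}$, to a real multiple of $\cos(n\pi/4)$, exactly as the factors $(1\pm 2\cos(2n\pi/3))$ emerged in the hexagonal and rhombic cases.

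The main obstacle is purely combinatorial: one must enumerate up to $6^3=216$ ordered tuples for the cubic identities, group them by the value of $C$ they produce, and tally the signed phase contributions for each rational $C$. This is readily automated in a computer algebra system but laborious to verify by hand, which is precisely why Propositions \ref{prop:hex} and \ref{prop:rhom} omit the detailed calculation; the present proof would follow the same convention, presenting only the grouped totals that give the stated rational coefficients $\tfrac{1}{3}$, $\tfrac{1}{6}$, $\tfrac{1}{18}$, $\tfrac{1}{36}$ and $\tfrac{11}{36}$ appearing in \eqref{ident:D12;Besselsums}.
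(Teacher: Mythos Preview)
Your proposal is correct and follows exactly the approach the paper outlines after Proposition~\ref{prop:hex}: form products of $u_Q$ and $\overline{u_Q}$ in their plane-wave form, collapse each resulting exponent via trigonometric identities to $\mathrm{e}^{\mathrm{i}Cr\cos(\theta+\phi)}$, project onto $\mathrm{e}^{\mathrm{i}3n\theta}$ using the Hansen--Bessel formula, and sort the outcomes by whether $C$ is rational or irrational. The paper states that this same method proves Propositions~\ref{prop:hex}--\ref{prop:D12} and omits the details as ``simple but laborious''; your discussion of the combinatorial enumeration and the emergence of $b_n$ from the symmetry of the angle set merely makes explicit what the paper leaves implicit.
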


Other more complicated patterns can be considered beyond the four that we concentrate on in this paper; for instance one could choose squares or other cases of rotated hexagons considered in \cite{IoossRucklidge2022RotatedHexagons}, including other quasipattern or superlattice structures. In the following remark we highlight an extension of Proposition \ref{prop:D12} to arbitrary rotations of two hexagon lattices.

\begin{rmk}\label{rmk:rotate}
    Consider the pattern $u_{\alpha}(r,\theta) = \frac{1}{2}\left( u_{H}(r,\theta+\alpha) + u_{H}(r,\theta-\alpha)\right)$ for some $\alpha\in(0,\frac{\pi}{6})$. Then,
    \begin{equation*}
        u_{\alpha}(r,\theta) = \sum_{k\in\mathbb{Z}} \mathrm{i}^{3k} c_{k} J_{3k}(r)\,\mathrm{e}^{\mathrm{i}3k\theta}
    \end{equation*}
    where $c_k:=\cos(3k\alpha)$, and \eqref{ident:D12;Besselsums} is satisfied by replacing any $b_n$ terms by $c_{n}$ and changing the irrational frequencies. In particular, the irrational frequencies $\omega_1,\dots,\omega_n$ within each arbitrary function $f_i$ in \eqref{ident:D12;Besselsums} all depend on $\alpha$ but do not equal 1 for any values of $\alpha\in(0,\frac{\pi}{6})$. Hence, an amplitude equation derived for $u_Q$ is also an amplitude equation for $u_{\alpha}$ after replacing any $b_n$ terms by $c_n$ for any $\alpha\in(0,\frac{\pi}{6})$.
\end{rmk}
%
%
%
%
%
%%%%%%%%%%%%%%%%%
%   Section 3   %
%%%%%%%%%%%%%%%%%   
%
%
%
%
%
\section{Radial amplitude equation for the Swift--Hohenberg equation}\label{s:amplitude}
We now wish to apply the tools presented in the previous section in order to derive a radial amplitude equation for a planar PDE \eqref{e:SHE}. We define $\mu = \varepsilon^2 \hat{\mu}$ for $0<\varepsilon\ll1$, and introduce the slow variables $T:=\varepsilon^2 t$, $R:=\varepsilon r$, with $k$-index slow differential operators $\partial_{R}$ and $\hat{\Delta}_{k}$. 

We recall from \S\ref{ss:bessel-deriv} that, under the assumption that $u_n$ has the form $u_n(T,r,R)$, where $u_n$ acts like an $J_n$ Bessel function with respect to $r$ and like an axisymmetric function with respect to $R$, we can express the differential operator for \eqref{e:SHE-n} in the asymptotic form \eqref{diff:asymp},
\begin{equation*}
    \begin{split}
        \left(1 + \Delta_{n}\right)^2 u_n ={}& \left(1 + \Delta_{n}\right)^{2}u_n + \varepsilon\,\left[4\,\partial_{r}\left(1 + \Delta_{n}\right)\partial_{R} u_n\right] + \varepsilon^2 \left[2\left(1 + \Delta_{n}\right) \hat{\Delta}_{0} u_n + 4\,\Delta_{n}\,\partial_{R}^{2} u_n\right] + \mathcal{O}(\varepsilon^3).\\
    \end{split}
\end{equation*}
Then, taking a regular perturbation
\begin{equation*}
    u_n = \varepsilon v_{n}^{(0)}(T,r,R) + \varepsilon^2 v_{n}^{(1)}(T,r,R) + \varepsilon^3 v_{n}^{(2)}(T,r,R) + \mathcal{O}(\varepsilon^{4}), 
\end{equation*}
we obtain
\begin{subequations}\label{eqn:asymp}
\begin{align}
    \mathcal{O}(\varepsilon^{1})& & \quad 0 ={}& -\left(1 + \Delta_{n}\right)^{2}v^{(0)}_n \label{eqn:asymp-1}\\
    \mathcal{O}(\varepsilon^{2})& & \quad 0 ={}& -\left(1 + \Delta_{n}\right)^{2}v^{(1)}_n - 4\partial_{r}\left(1 + \Delta_{n}\right)\partial_{R} v^{(0)}_n + \nu \sum_{i+j=n}v^{(0)}_{i}v^{(0)}_{j}\label{eqn:asymp-2}\\
    \mathcal{O}(\varepsilon^{3})& & \quad \partial_T v^{(0)}_n ={}& -\left(1 + \Delta_{n}\right)^{2}v^{(2)}_n - 4\partial_{r}\left(1 + \Delta_{n}\right)\partial_{R}v^{(1)}_n - 2\left(1 + \Delta_{n}\right)\hat{\Delta}_{0} v^{(0)}_n\nonumber\\
    & & &\qquad  - 4\,\Delta_{n}\,\partial_{R}^{2}v^{(0)}_n - \hat{\mu} v^{(0)}_n + 2\nu \sum_{i+j=n}v^{(0)}_{i}v^{(1)}_{j} - \sum_{i+j+k=n}v^{(0)}_{i}v^{(0)}_{j}v^{(0)}_{k} \label{eqn:asymp-3}
\end{align}
\end{subequations}
The rest of this section is dedicated to solving \eqref{eqn:asymp} by applying the identities introduced in \S\ref{ss:bessel-nonlin}. We choose particular radial profiles that correspond to the patterns $u_{S}, u_{H}, u_{R}, u_{Q}$ defined in \eqref{defn:u-rolls}, \eqref{defn:u-hexagons}, \eqref{defn:u-rhomb} and \eqref{defn:u-quasi}, respectively.

%%%%%%%%%%%%%%%%%%%%%%%%%%%%%%%%%%%%%%%%%%%%
\subsection{Fully localised stripes}\label{ss:amplitude-rolls}
For the stripe pattern $u_R$ introduced in \S\ref{ss:bessel-nonlin}, we solve \eqref{eqn:asymp-1} with the ansatz
\begin{equation}\label{sol:v0}
 v_n^{(0)} = A(T,R) J_{n}(r) + \overline{A}(T,R)J_{-n}(r),
\end{equation}
where we note that we have chosen the complex amplitude $A(T,R)$ to be identical for all values of $n\in\mathbb{Z}$. We recall that this is equivalent to assuming that the original function $u(t,x,y)$ is composed of a planar pattern multiplied with a slowly varying axisymmetric amplitude. Using this ansatz, \eqref{eqn:asymp-2} becomes 
\begin{equation*}
\begin{aligned}
     \left(1 + \Delta_{n}\right)^{2}v^{(1)}_n ={}& \nu \sum_{i+j=n}v^{(0)}_{i}v^{(0)}_{j}\\
     ={}& \nu \sum_{i+j=n}\left(A J_{i}(r) + \overline{A}J_{-i}(r)\right)\left(A J_{j}(r) + \overline{A}J_{-j}(r)\right)\\
     ={}& \nu A^2 \sum_{i+j=n} J_{i}(r) J_{j}(r) + 2 \nu |A|^2\sum_{i+j=n}J_{i}(r)J_{-j}(r) + \nu \overline{A}^2\sum_{i+j=n} J_{-i}(r)J_{-j}(r)\\
     ={}& \nu A^2 J_{n}(2 r) + 2 \nu |A|^2 \delta_{n,0} + \nu \overline{A}^2 J_{-n}(2r)
\end{aligned}
\end{equation*}
Taking an ansatz of the form 
\begin{equation*}
    v_{n}^{(1)} = \delta_{n,0}\, O_{1} + A_1 J_{n}(r) + \overline{A_{1}}J_{-n}(r) + 
    B_1 J_{n}(2r) + \overline{B_{1}}J_{-n}(2r)
\end{equation*}
    we obtain
\begin{equation*}
\begin{aligned}
     \delta_{n,0}\, O_{1} + 
    9 B_1 J_{n}(2r) + 9 \overline{B_{1}}J_{-n}(2r) ={}& \nu A^2 J_{n}(2 r) + 2 \nu |A|^2 \delta_{n,0} + \nu \overline{A}^2 J_{-n}(2r)
\end{aligned}
\end{equation*}
which implies that
\begin{equation}\label{sol:v1}
    v_{n}^{(1)} = 2 \nu |A|^2 \delta_{n,0} + A_1 J_{n}(r) + \overline{A_{1}}J_{-n}(r) + 
    \frac{\nu}{9}A^2 J_{n}(2r) + \frac{\nu}{9}\overline{A}^2J_{-n}(2r).
\end{equation}
Then, we substitute \eqref{sol:v0} and \eqref{sol:v1} into \eqref{eqn:asymp-3} to obtain
\begin{equation*}
\begin{aligned}
    \partial_{T} A J_{n}(r) + \partial_{T}\overline{A} J_{-n}(r) ={}& -\left(1 + \Delta_{n}\right)^{2}v^{(2)}_n \\
    &\qquad + 12\left[\frac{\nu}{9}\partial_{R}\left(A^2\right) \partial_{r}J_{n}(2r) +\frac{\nu}{9}\partial_{R}\left(\overline{A}^2\right) \partial_{r} J_{-n}(2r)\right]\\
    & \qquad + \left[4 \partial_{R}^{2}A - \hat{\mu} A + 4\left(\frac{19\nu^2}{18} - \frac{3}{4}\right)|A|^2 A \right]J_{n}(r) \\
    & \qquad + \left[4 \partial_{R}^{2}\overline{A} - \hat{\mu} \overline{A} + 4\left(\frac{19\nu^2}{18} - \frac{3}{4}\right)|A|^2 \overline{A} \right]J_{-n}(r) \\
    & \qquad + 2\nu \left(A \overline{A_{1}} + \overline{A} A_1\right)\delta_{n,0} \\
    & \qquad + 2\nu A A_1 J_{n}(2r) + 2\nu \overline{A} \overline{A_{1}} J_{-n}(2r) \\
    & \qquad + \left(\frac{4\nu^2}{18} - 1 \right) A^3 J_{n}(3r)  + \left(\frac{4\nu^2}{18} -  1\right)\overline{A}^{3} J_{-n}(3r) \\
\end{aligned}
\end{equation*}
 We require the $A$-dependent terms to lie in the range of the operator $(1+\Delta_{n})^2$, and so the coefficients of the prime frequency $J_{\pm n}(r)$ must be identically zero. Hence, we obtain a solvability condition for $A$ in the form of the standard cubic Ginzburg--Landau amplitude equation
\begin{equation}\label{e:Ginz}
    \partial_T A = 4 \partial_{R}^2 A - \hat{\mu}A + 4\left(\frac{19\nu^2}{18} - \frac{3}{4}\right)|A|^2 A,
\end{equation}
now posed on the half-line $R\geq0$ rather than on the whole real line. We note that the equation \eqref{e:Ginz} is identical to the amplitude equation \eqref{e:Ginz-1D} derived in the one-dimensional Swift--Hohenberg equation. 

In order to find a steady localised solution of \eqref{e:Ginz}, we restrict to the invariant subspace $A\in\mathbb{R}$. The assumption that $A$ is axisymmetric induces a compatibility condition at $R=0$ of the form $\partial_{R}A(0) = 0$. By reversibility of the time-independent case of \eqref{e:Ginz}, any steady localised solution of \eqref{e:Ginz} must thus form a homoclinic orbit. Then, for $\nu\leq \sqrt{\frac{27}{38}}$ there are no real steady localised solutions of \eqref{e:Ginz}, and for $\nu> \sqrt{\frac{27}{38}}$ there is a real steady localised solution
\begin{equation}\label{sol:A}
A(R) = \pm \sqrt{\frac{18\hat{\mu}}{38\nu^2 - 27}} \,\mathrm{sech}\left(\frac{\sqrt{\hat{\mu}}}{2}R\right).
\end{equation}

Having found a real steady localised solution to \eqref{e:Ginz}, we can write down a steady fully localised solution to \eqref{e:SHE} of the form
\begin{equation*}
\begin{split}
    \tilde{u}(r,\theta) ={}& \varepsilon A(\varepsilon r)\sum_{n\in\mathbb{Z}} \left( J_{n}(r) + J_{-n}(r)\right)\mathrm{e}^{\mathrm{i}n\theta} + \mathcal{O}(\varepsilon^2),\\
    ={}& 2\varepsilon A(\varepsilon r)\sum_{n\in\mathbb{Z}} J_{|2n|}(r) \cos(2n\theta) + \mathcal{O}(\varepsilon^2),\\
    ={}& 2\varepsilon A(\varepsilon r)\cos(r\sin(\theta)) + \mathcal{O}(\varepsilon^2),\\
\end{split}
\end{equation*}
where we have used the real-valued Jacobi--Anger expansion
\begin{equation*}
    \cos(r\sin(\theta)) = \sum_{n\in\mathbb{Z}} J_{|2n|}(r)\,\cos(2n\theta)
\end{equation*}
and the fact that $J_{-n}(r) = (-1)^n J_{n}(r)$ for any $n\in\mathbb{Z}$.
Converting into Cartesian coordinates, we have found a fully localised solution to \eqref{e:SHE} of the form
\begin{equation*}
    u(x,y) = 2\varepsilon A\left(\varepsilon \sqrt{x^2+y^2}\right)\cos(y) + \mathcal{O}(\varepsilon^2),
\end{equation*}
as well as 
\begin{equation*}
    u(x,y) = 2\varepsilon A\left(\varepsilon \sqrt{x^2+y^2}\right)\cos(x) + \mathcal{O}(\varepsilon^2),
\end{equation*}
since the SHE \eqref{e:SHE} is invariant under rotation.

%%%%%%%%%%%%%%%%%%%%%%%%%%%%%%%%%%%%%%%%%%%%
\subsection{Other fully localised patterns}\label{ss:amplitude-hex}
We now consider the other dihedral patterns $u_{H}$, $u_{R}$ and $u_{Q}$ defined in \S~\ref{ss:bessel-nonlin}. We note that the quadratic projections in \eqref{ident:hex;Besselsums}, \eqref{ident:rhombic;Besselsums} \& \eqref{ident:D12;Besselsums} all include $J_{n}(r)$ terms; this means that the quadratic terms in \eqref{eqn:asymp} do not lie entirely in the range of $(1+\Delta_{n})^2$ and thus cannot be removed by normal form transformations. We note that this is not unique to our radial projections, and is also observed for hexagons in Cartesian coordinates due to the presence of resonant triads in Fourier space (see Section 5.4 in \cite{Hoyle2006}, for example).

To circumvent this problem, we take $\nu:=\varepsilon \hat{\nu}$ such that \eqref{eqn:asymp} becomes
\begin{subequations}\label{eqn:asymp;hex}
\begin{align}
    \mathcal{O}(\varepsilon^{1})& & \quad 0 ={}& -\left(1 + \Delta_{n}\right)^{2}v^{(0)}_{n} \label{eqn:asymp;hex-1}\\
    \mathcal{O}(\varepsilon^{2})& & \quad 0 ={}& -\left(1 + \Delta_{n}\right)^{2}v^{(1)}_{n} - 4\partial_{r}\left(1 + \Delta_{n}\right)\partial_{R} v^{(0)}_{n}\label{eqn:asymp;hex-2}\\
    \mathcal{O}(\varepsilon^{3})& & \quad \partial_T v^{(0)}_{n} ={}& -\left(1 + \Delta_{n}\right)^{2}v^{(2)}_{n} - 4\partial_{r}\left(1 + \Delta_{n}\right)\partial_{R}v^{(1)}_{n} - 2\left(1 + \Delta_{n}\right)\hat{\Delta}_{0} v^{(0)}_{n}\nonumber\\
    & & &\qquad  - 4\,\Delta_{n}\,\partial_{R}^{2}v^{(0)}_{n} - \hat{\mu} v^{(0)}_{n}  + \hat{\nu} \sum_{i+j=n}v^{(0)}_{i}v^{(0)}_{j} - \sum_{i+j+k=n}v^{(0)}_{i}v^{(0)}_{j}v^{(0)}_{k} \label{eqn:asymp;hex-3}
\end{align}
\end{subequations}
For the hexagonal pattern $u_{H}$ we solve \eqref{eqn:asymp;hex-1} and \eqref{eqn:asymp;hex-2} with the ansatz
\begin{equation}\label{sol:hex1}
    v_{3n}^{(0)} = A J_{3n}(r) + \overline{A} J_{-3n}(r),  \qquad v_{3n}^{(1)} = A_1 J_{3n}(r) + \overline{A_1} J_{-3n}(r)
\end{equation}
and $v_{n}^{(0)}=v_{n}^{(1)}=0$ for all $n\in\mathbb{Z}$ with $n$ not a multiple of $3$, such that \eqref{eqn:asymp;hex-3} becomes
\begin{equation*}
    \begin{aligned}
        \partial_T A J_{3n}(r) + \partial_{T}\overline{A} J_{-3n}(r) ={}& 4\,\partial_{R}^{2}A J_{3n}(r) + 4\,\partial_{R}^{2}\overline{A} J_{-3n}(r) - \hat{\mu} A J_{3n}(r) -  \hat{\mu} \overline{A} J_{-3n}(r) \\
        & \qquad -\left(1 + \Delta_{3n}\right)^{2}v^{(2)}_{3n} + 2\hat{\nu} |A|^2 \sum_{i+j=n} J_{3i}(r)J_{-3j}(r)\\
        &\qquad  + \hat{\nu} A^{2} \sum_{i+j=n} J_{3i}(r) J_{3j}(r)  + \hat{\nu}\overline{A}^2 \sum_{i+j=-n}J_{3i}(r)J_{3j}(r)\\
        &\qquad - A^3 \sum_{i+j+k=n}J_{3i}(r)J_{3j}(r) J_{3k}(r) - 3|A|^2 A \sum_{i+j+k=n} J_{3i}(r) J_{3j}(r) J_{-3k}(r)\\
        &\qquad - 3 |A|^2 \overline{A}\sum_{i+j+k=-n} J_{3i}(r) J_{3j}(r) J_{-3k}(r) - \overline{A}^3 \sum_{i+j+k=-n} J_{3i}(r) J_{3j}(r)J_{3k}(r)
    \end{aligned}
\end{equation*}
and, using the identities \eqref{ident:hex;Besselsums},
\begin{equation*}
    \begin{aligned}
        \partial_T A J_{3n}(r) + \partial_{T}\overline{A} J_{-3n}(r) ={}& \left[4\,\partial_{R}^{2}A - \hat{\mu} A  - \frac{5}{3}|A|^2 A  + \frac{2\hat{\nu}}{3}\overline{A}^2 \right]J_{3n}(r) -\left(1 + \Delta_{3n}\right)^{2}v^{(2)}_{3n} \\
        &\qquad + \left[4\,\partial_{R}^{2}\overline{A} -  \hat{\mu}\overline{A} - \frac{5}{3}|A|^2 \overline{A} + \frac{2\hat{\nu}}{3} A^{2} \right] J_{-3n}(r)\\
        & \qquad + \frac{1}{3}\left[\hat{\nu} A^{2} - 2|A|^2 \overline{A} \right] J_{3n}(2r) + \frac{1}{3}\left[\hat{\nu} \overline{A}^2 - 2|A|^2 A\right] J_{-3n}(2r) \\
        &\qquad + \frac{2\hat{\nu}}{3}|A|^2 \delta_{n,0} - \frac{1}{9} A^3 J_{3n}(3r) - \frac{1}{9}\overline{A}^3 J_{-3n}(3r)\\
        &\qquad + f(\sqrt{3}\,r,\sqrt{7}\,r)\\
    \end{aligned}
\end{equation*}
for some function $f$ of irrational frequencies $\{\sqrt{3}\,r, \sqrt{7}\,r\}$. We define $A_h:=\frac{1}{3}A$ and again obtain a solvability condition for $A_h$ in the form of the amplitude equation
\begin{equation}\label{e:Ginz-hex}
    \begin{aligned}
        \partial_T A_h ={}&  4\,\partial_{R}^{2}A_h - \hat{\mu} A_h - 15|A_h|^2 A_h + 2\hat{\nu}\overline{A_h}^2.\\
    \end{aligned}
\end{equation}
Note, if we consider the rhombic pattern $u_{R}$ defined in \eqref{defn:u-rhomb} and follow the same approach as above, we obtain
\begin{equation}\label{sol:rhomb1}
    v_{n}^{(0)} = 3A_{h} a_{n}J_{n}(r) + 3\overline{A_{h}} a_{n} J_{-n}(r),  \qquad v_{n}^{(1)} = A_1 a_{n}J_{n}(r) + \overline{A_1} a_{n}J_{-n}(r)
\end{equation}
for all $n\in\mathbb{Z}$, where $A_{h}$ again satisfies \eqref{e:Ginz-hex}. Finally, considering the quasipattern $u_{Q}$ defined in \eqref{defn:u-quasi} and following the same approach, we obtain
\begin{equation}\label{sol:D12}
    v_{3n}^{(0)} = 6A_{q} b_{n}J_{3n}(r) + 6\overline{A_{q}} b_{n} J_{-3n}(r),  \qquad v_{3n}^{(1)} = A_1 b_{n}J_{3n}(r) + \overline{A_1} b_{n}J_{-3n}(r)
\end{equation}
and $v_{n}^{(0)}=v_{n}^{(1)}=0$ for all $n\in\mathbb{Z}$ with $n$ not a multiple of $3$, where $A_{q}$ satisfies
\begin{equation}\label{e:Ginz-D12}
    \begin{aligned}
        \partial_T A_q ={}&  4\,\partial_{R}^{2}A_q - \hat{\mu} A_q - 33|A_q|^2 A_q + 2\hat{\nu}\overline{A_q}^2.\\
    \end{aligned}
\end{equation}

%% Figure 4: Plots of amplitude equations when approaching the Maxwell point
\begin{figure}[t!]
    \centering
    \includegraphics[width=0.9\linewidth]{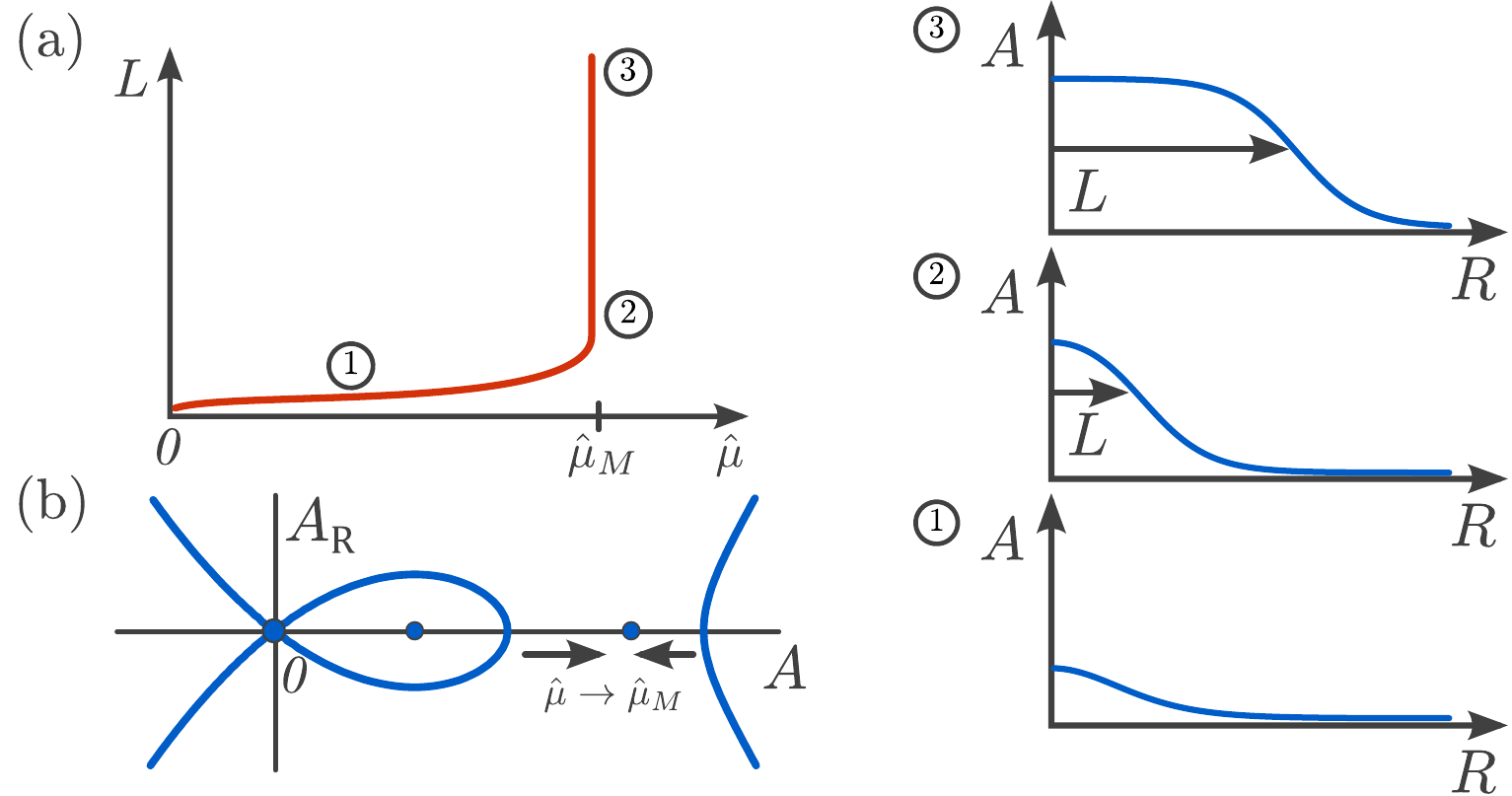}
    \caption{Sketches of (a) the bifurcation diagram and (b) the corresponding phase plane of the amplitude equations (\ref{e:Ginz-hex}) and (\ref{e:Ginz-D12}). A homoclinic orbit bifurcates from $\hat{\mu}=0$ and tends to a heteroclinic connection as $\hat{\mu}\to\hat{\mu}_{M}$, resulting in a localised solution that widens into a uniform state as $\hat{\mu}$ increases.}
    \label{fig:maxwell_point}
\end{figure}
 
Since \eqref{e:Ginz-D12} is independent of $b_n$, it follows from Remark \ref{rmk:rotate} that \eqref{e:Ginz-D12} is also the amplitude equation for the superposition of two arbitrarily rotated hexagon lattices $u_{\alpha}$ for any rotation angle $\alpha\in(0,\frac{\pi}{6})$. In order to study steady localised solutions of \eqref{e:Ginz-hex} and \eqref{e:Ginz-D12}, we again restrict to the real invariant subspaces $A_{h}\in\mathbb{R}$, $A_{q}\in\mathbb{R}$. Stationary, real solutions of equations~(\ref{e:Ginz-hex}) and (\ref{e:Ginz-D12}) can be investigated using phase plane analysis, where we observe a small amplitude homoclinic solution that bifurcates off the trivial state and broadens to a ``table-top" localised pattern as one approaches the Maxwell point, $\hat{\mu}=\hat{\mu}_M$; see Figure~\ref{fig:maxwell_point}. In particular, for general equations of the form
\begin{equation*}
    0=4\partial_{R}^{2}A - \hat{\mu}A + 2\hat{\nu}A^2 - a A^3
\end{equation*}
there is an explicit homoclinic solution
\begin{equation*}
\begin{split}
    A(R) ={}& \frac{\sqrt{\frac{2}{a}}\,\hat{\mu}}{\sqrt{\hat{\mu}_{M}} + \sqrt{\hat{\mu}_{M} - \hat{\mu}}\,\cosh\left(\frac{\sqrt{\hat{\mu}}}{2}R\right)},\\
\end{split}
\end{equation*}
for all $0<\hat{\mu}<\hat{\mu}_{M}$, where the Maxwell point $\hat{\mu}_{M}$ is located at $\hat{\mu}_{M} = \frac{8}{9a}\hat{\nu}^2$. Then, for the hexagons and rhomboids we find that $\hat{\mu}_{M} = \hat{\mu}_{H} := \frac{8}{135}\hat{\nu}^2$, which is exactly the same Maxwell point as found for fully localised hexagons in \cite{Lloyd2008LocalizedHexagons}, and for the quasipatterns $\hat{\mu}_{M} = \hat{\mu}_{Q} := \frac{8}{297}\hat{\nu}^2<\hat{\mu}_H$.

Hence, for $\hat{\mu}<\hat{\mu}_{H}$ there is a homoclinic orbit that corresponds to a localised solution $A_h$ to \eqref{e:Ginz-hex}, and thus to steady localised solutions $u_{H}, u_{R}$ to \eqref{e:SHE} of the form
\begin{equation*}
\begin{split}
    u_{H}(x,y) ={}& 2 \varepsilon A_h(\varepsilon \sqrt{x^2 + y^2})\left(\cos(x) + \cos(\tfrac{x-\sqrt{3}y}{2}) + \cos(\tfrac{x+\sqrt{3}y}{2})\right) + \mathcal{O}(\varepsilon^2),\\
    u_{R}(x,y) ={}& 2 \varepsilon A_h(\varepsilon \sqrt{x^2 + y^2})\left(\cos(x) - \cos(\tfrac{x-\sqrt{3}y}{2}) - \cos(\tfrac{x+\sqrt{3}y}{2})\right) + \mathcal{O}(\varepsilon^2).\\
\end{split}
\end{equation*}

Similarly, for $\hat{\mu}<\hat{\mu}_{Q}$ there is a homoclinic orbit that corresponds to a localised solution $A_q$ to \eqref{e:Ginz-D12}, and thus a steady localised solution $u_Q$ to \eqref{e:SHE} of the form
\begin{equation*}
\begin{split}
    u_{Q}(x,y) = 2 \varepsilon A_q(\varepsilon \sqrt{x^2 + y^2})\bigg(&\cos(\tfrac{(1+\sqrt{3})x+(1-\sqrt{3})y}{2\sqrt{2}}) + \cos(\tfrac{(1+\sqrt{3})x-(1-\sqrt{3})y}{2\sqrt{2}}) + \cos(\tfrac{(1-\sqrt{3})x+(1+\sqrt{3})y}{2\sqrt{2}})\\
    &\quad + \cos(\tfrac{(1-\sqrt{3})x-(1+\sqrt{3})y}{2\sqrt{2}}) + \cos(\tfrac{x + y}{\sqrt{2}}) + \cos(\tfrac{x - y}{\sqrt{2}})\bigg) + \mathcal{O}(\varepsilon^2).
\end{split}
\end{equation*}

\section{Fully localised hexagons in reaction--diffusion systems}\label{s:RD}
While we have so far only considered fully localised patterns in the Swift--Hohenberg equation \eqref{e:SHE}, our approach can also be applied to other PDE systems. To highlight this, we consider a general class of two-component reaction--diffusion systems and derive a radial amplitude equation corresponding to fully localised hexagons. Two-component reaction--diffusion systems are used to model a myriad of physical phenomena, ranging from dryland vegetation~\cite{vonHardenberg2001Vegetation} to nonlinear optics~\cite{LugiatoLefever1987Optics} to the spread of epidemics~\cite{Allen2008RDEpidemic}, and can be written in the following form
\begin{equation}\label{e:RD}
    \partial_t\mathbf{u} = \mathbf{D}(\mu)\Delta\mathbf{u} - \mathbf{f}(\mathbf{u};\mu)
\end{equation}
where $\mathbf{u}= \mathbf{u}(t,x,y)\in\mathbb{R}^2$ and $\mu\in\mathbb{R}$. We assume the diffusion matrix $\mathbf{D}(\mu)$ is invertible for all $\mu\in[0,\mu_0]$, for some $\mu_0>0$, and that the trivial state undergoes a pattern-forming instability at $\mu=0$. Following the work of Hill et al.~\cite{Hill2023DihedralSpots,Hill2024DihedralRings}, we consider stationary solutions of \eqref{e:RD} and invert the matrix $\mathbf{D}(\mu)$. Then, performing a Taylor expansion of \eqref{e:RD} about $(\mathbf{u},\mu) = (\mathbf{0},0)$, we obtain
\begin{equation}\label{e:RD-Taylor}
    \mathbf{0} = \Delta\mathbf{u} - \mathbf{M}_{1}\mathbf{u} - \mu\mathbf{M}_{2}\mathbf{u} - \nu \mathbf{Q}(\mathbf{u},\mathbf{u}) - \mathbf{C}(\mathbf{u},\mathbf{u},\mathbf{u}),
\end{equation}
where $\nu\in\mathbb{R}$ is a fixed parameter, $\mathbf{M}_i$ are fixed real matrices, and $\mathbf{Q}$, $\mathbf{C}$ are bilinear and trilinear functions, respectively. Note that we have truncated the Taylor expansion of \eqref{e:RD} at the cubic order in $\mathbf{u}$; any higher order terms do not affect our analysis, and so we omit them for simplicity. Finally, we assume that $\mathbf{M}_{1}$ has repeated eigenvalues $\lambda=-k_c^2$ with generalised eigenvectors $\hat{U}_0,\hat{U}_1\in\mathbb{R}^{2}$ such that
\begin{equation*}
    \left(\mathbf{M}_1 + k_c^2\right)\hat{U}_0 = \mathbf{0},\qquad \left(\mathbf{M}_1 + k_c^2\right)\hat{U}_1 = k_c^2\hat{U}_{0},
\end{equation*}
with dual vectors $\{\hat{U}_0^*, \hat{U}_1^*\}$ such that $\hat{U}_i^*\cdot\hat{U}_{j} = \delta_{i,j}$.

We restrict ourselves to looking for fully localised hexagon solutions to \eqref{e:RD-Taylor}, and so we expand $\mathbf{u}$ in the following $\mathbb{D}_{6}$ Fourier series
\begin{equation*}
    \mathbf{u}(x,y) = \tilde{\mathbf{u}}(r,\theta) = \sum_{n\in\mathbb{Z}}\mathbf{u}_{3n}(r)\mathrm{e}^{\mathrm{i}3n\theta}
\end{equation*}
with the standard reality condition $\mathbf{u}_{-3n} = \overline{\mathbf{u}_{3n}}$, for which \eqref{e:RD-Taylor} becomes
\begin{equation}\label{e:RD;Taylor-n}
    \mathbf{0} = \Delta_{3n}\mathbf{u}_{3n} - \mathbf{M}_{1}\mathbf{u}_{3n} - \mu\mathbf{M}_{2}\mathbf{u}_{3n} - \nu\sum_{i+j=n} \mathbf{Q}(\mathbf{u}_{3i},\mathbf{u}_{3j}) - \sum_{i+j+k=n} \mathbf{C}(\mathbf{u}_{3i},\mathbf{u}_{3j},\mathbf{u}_{3k})
\end{equation}
for all $n\in\mathbb{Z}$. We define $\mu = \varepsilon^2 \hat{\mu}$, $\nu = \varepsilon\hat{\nu}$ and $R = \varepsilon r$, and take an expansion of the form
\begin{equation*}
    \mathbf{u}_{3n} = \varepsilon\,\mathbf{u}_{3n}^{(0)}(r,R) + \varepsilon^2\,\mathbf{u}_{3n}^{(1)}(r,R) + \varepsilon^3\,\mathbf{u}_{3n}^{(2)}(r,R) + \mathcal{O}(\varepsilon^4)
\end{equation*}
so that $\Delta_{3n}\mathbf{u}_{3n}$ becomes $\Delta_{3n}\mathbf{u}_{3n} + \varepsilon (\mathcal{D}_{3n} + \mathcal{D}_{-3n})\hat{\mathcal{D}}_{0}\mathbf{u}_{3n} + \varepsilon^{2}\hat{\Delta}_{0}\mathbf{u}_{3n}$ and \eqref{e:RD;Taylor-n} becomes
\begin{subequations}\label{e:RD;asymp}
    \begin{align}
        \mathcal{O}(\varepsilon):& &\qquad \left(\mathbf{M}_1 - \Delta_{3n}\right)\mathbf{u}_{3n}^{(0)} ={}& \mathbf{0},\label{e:RD;asymp-1}\\
        \mathcal{O}(\varepsilon^2):& &\qquad \left(\mathbf{M}_1 - \Delta_{3n}\right)\mathbf{u}_{3n}^{(1)} ={}& \left( \mathcal{D}_{3n} + \mathcal{D}_{-3n}\right)\hat{\mathcal{D}}_{0}\mathbf{u}_{n}^{(0)},\label{e:RD;asymp-2}\\
        \mathcal{O}(\varepsilon^3):& &\qquad \left(\mathbf{M}_1 - \Delta_{3n}\right)\mathbf{u}_{3n}^{(2)} ={}& \left(\hat{\Delta}_{0} - \hat{\mu}\mathbf{M}_{2}\right)\mathbf{u}_{3n}^{(0)} + \left( \mathcal{D}_{3n} + \mathcal{D}_{-3n}\right)\hat{\mathcal{D}}_{0}\mathbf{u}_{3n}^{(1)}\nonumber\\
        & & & - \hat{\nu}\sum_{i+j=n} \mathbf{Q}(\mathbf{u}^{(0)}_{3i},\mathbf{u}^{(0)}_{3j}) - \sum_{i+j+k=n} \mathbf{C}(\mathbf{u}^{(0)}_{3i},\mathbf{u}^{(0)}_{3j},\mathbf{u}^{(0)}_{3k}).\label{e:RD;asymp-3}
    \end{align}
\end{subequations}
We solve \eqref{e:RD;asymp-1} with
    \begin{equation}\label{soln:RD;u0}
        \mathbf{u}_{3n}^{(0)} = A(R) J_{3n}(k_c\,r)\hat{U}_0 + \overline{A(R)} J_{-3n}(k_c\,r)\hat{U}_0,
    \end{equation}
    and \eqref{e:RD;asymp-2} with 
\begin{equation}\label{soln:RD;u1}
\begin{split}
    \mathbf{u}_{3n}^{(1)} ={}& \left[A_1 J_{3n}(k_c\,r) + \overline{A_1} J_{-3n}(k_c\,r)\right]\hat{U}_0\\
    &\quad + \frac{1}{k_c^2}\left[\partial_{R}A \left(\mathcal{D}_{3n} + \mathcal{D}_{-3n}\right) J_{3n}(k_c\,r) + \partial_{R}\overline{A} \left(\mathcal{D}_{3n} + \mathcal{D}_{-3n}\right) J_{-3n}(k_c\,r)\right]\hat{U}_1,
\end{split}
    \end{equation}
    where we note that
    \begin{equation*}
\begin{split}
    \left(\mathbf{M}_1 - \Delta_{3n}\right)[\left(\mathcal{D}_{3n} + \mathcal{D}_{-3n}\right) J_{3n}(k_c\,r)]\hat{U}_{1} ={}& k_c^2 [\left(\mathcal{D}_{3n} + \mathcal{D}_{-3n}\right) J_{3n}(k_c\,r)]\hat{U}_{0}\\
    & -(\Delta_{3n} + k_c^2)[\left(\mathcal{D}_{3n} + \mathcal{D}_{-3n}\right) J_{3n}(k_c\,r)]\hat{U}_{1},\\
    ={}& k_c^2 \left(\mathcal{D}_{3n} + \mathcal{D}_{-3n}\right) J_{3n}(k_c\,r)\hat{U}_0 + \mathcal{O}(\varepsilon^2).\\
\end{split}
\end{equation*}
 Here we have used the fact that $\Delta_{n} = \Delta_{m} + \frac{(m^2-n^2)}{r^2}$ and $(\Delta_{3n\mp1} + k_c^2)\mathcal{D}_{\pm3n}J_{3n}(k_c r) = 0$, and so
\begin{equation*}
    \begin{split}
    (\Delta_{3n} + k_c^2)\left(\mathcal{D}_{3n} + \mathcal{D}_{-3n}\right) J_{3n}(k_c\,r) ={}& k_c (\Delta_{3n} + k_c^2) J_{3n-1}(k_c\,r) - k_c (\Delta_{3n} + k_c^2) J_{3n+1}(k_c\,r) \\
    ={}& \frac{1}{r^2}\left[\left(\mathcal{D}_{3n} + \mathcal{D}_{-3n}\right)J_{3n}(k_c r) - \frac{(6n)^2}{r}J_{3n}(k_c r)\right]    
    \end{split}
\end{equation*}
Evaluating on the slow spatial scale $R$ we conclude that $(\Delta_{3n} + k_c^2)\left(\mathcal{D}_{3n} + \mathcal{D}_{-3n}\right) J_{3n}(k_c\,r) = \mathcal{O}(\varepsilon^2)$, since $\frac{1}{r^2} = \varepsilon^2\,\frac{1}{R^2}$, and hence this term is absorbed into the $\mathcal{O}(\varepsilon^4)$ equation, which does not affect our analysis. 

We substitute \eqref{soln:RD;u0} and \eqref{soln:RD;u1} into \eqref{e:RD;asymp-3}, resulting in 
\begin{equation*}
    \begin{split}
\left(\mathbf{M}_1 - \Delta_{3n}\right)\mathbf{u}_{3n}^{(2)} ={}& \left(\hat{\Delta}_{0} - \hat{\mu}\mathbf{M}_{2}\right)\mathbf{u}_{3n}^{(0)} + \left(\mathcal{D}_{3n} + \mathcal{D}_{-3n}\right)\hat{\mathcal{D}}_{0}\mathbf{u}_{3n}^{(1)} \\
& - \hat{\nu}\sum_{i+j=n} \mathbf{Q}(\mathbf{u}^{(0)}_{3i},\mathbf{u}^{(0)}_{3j}) - \sum_{i+j+k=n} \mathbf{C}(\mathbf{u}^{(0)}_{3i},\mathbf{u}^{(0)}_{3j},\mathbf{u}^{(0)}_{3k}),\\
={}& \left(\hat{\Delta}_{0} - \hat{\mu}\mathbf{M}_{2}\right)\hat{U}_0\left( A J_{3n}(k_c\,r) + \overline{A} J_{-3n}(k_c\,r)\right)\\
& + \left[ \partial_{R}A_1 \left(\mathcal{D}_{3n} + \mathcal{D}_{-3n}\right) J_{3n}(k_c\,r)  + \partial_{R}\overline{A_1} \left(\mathcal{D}_{3n} + \mathcal{D}_{-3n}\right) J_{-3n}(k_c\,r)\right]\hat{U}_0 \\
& + \left[\partial^2_{R}A \left(-4J_{3n}(k_c\,r) + \mathcal{O}(\varepsilon)\right) + \partial_{R}^2\overline{A} \left( - 4J_{-3n}(k_c r) + \mathcal{O}(\varepsilon)\right)\right]\hat{U}_1 \\
& - \hat{\nu}\,\mathbf{Q}_{0} A^2 \sum_{i+j=n} J_{3i}(k_c\,r)J_{3j}(k_c\,r) - 2 \hat{\nu}\,\mathbf{Q}_{0} |A|^2 \sum_{i+j=n} J_{3i}(k_c\,r) J_{-3j}(k_c\,r) \\
& - \hat{\nu}\,\mathbf{Q}_{0} \overline{A}^2 \sum_{i+j=-n} J_{3i}(k_c\,r) J_{3j}(k_c\,r) - \mathbf{C}_{0} A^3 \sum_{i+j+k=n}J_{3i}(k_c r)J_{3j}(k_c r) J_{3k}(k_c r)\\
&  - 3 \mathbf{C}_{0}|A|^2 A \sum_{i+j+k=n} J_{3i}(k_c r) J_{3j}(k_c r) J_{-3k}(k_c r)\\
& - 3 \mathbf{C}_{0}|A|^2 \overline{A}\sum_{i+j+k=-n} J_{3i}(k_c r) J_{3j}(k_c r) J_{-3k}(k_c r)\\
&- \mathbf{C}_{0}\overline{A}^3 \sum_{i+j+k=-n} J_{3i}(k_c r) J_{3j}(k_c r)J_{3k}(k_c r),
    \end{split}
\end{equation*} 
where we have defined $\mathbf{Q}_{0}:=\mathbf{Q}(\hat{U}_0,\hat{U}_0)$ and $\mathbf{C}_{0}:=\mathbf{C}(\hat{U}_0,\hat{U}_0,\hat{U}_0)$. Again the $\mathcal{O}(\varepsilon)$ terms are absorbed into the $\mathcal{O}(\varepsilon^4)$ equation. We take the inner product of $(\mathbf{M}_1 - \Delta_{3n})\mathbf{u}_{3n}^{(2)}$ with $\hat{U}_1^*$ and obtain the following solvability condition
\begin{equation}\label{e:Ginz;RD-Hex}
0 ={} -4 \partial^2_{R}A_h -\hat{\mu}\left(\hat{U}^*_1\cdot\mathbf{M}_{2}\hat{U}_0\right) A_h - 2\hat{\nu}\,\left(\hat{U}^*_1\cdot\mathbf{Q}_{0}\right)\overline{A_h}^2 - 15\left(\hat{U}^*_1\cdot\mathbf{C}_{0}\right) |A_h|^2 A_h,
\end{equation}
where we have again defined $A_h:=\frac{1}{3}A$ and used \eqref{ident:hex;Besselsums} to simplify the convolutional sums. Thus, we obtain an amplitude equation that is qualitatively the same as \eqref{e:Ginz-hex} for the Swift--Hohenberg equation. 

\section{Conclusion}\label{s:Discussion}
In this paper, we have derived a new multiple-scales asymptotic analysis to explain the emergence of fully localised planar patterns. We first posed the Swift--Hohenberg equation in polar coordinates before carrying out a Fourier expansion in the angular variable. By introducing a slow variable $R=\varepsilon r$ and performing a formal multiple-scales analysis, we found a slowly-varying axisymmetric amplitude over a domain-covering pattern described by an infinite sum of Bessel functions. Here, each Bessel function replaces the standard exponential function in our multiple-scales analysis, and so we developed appropriate tools and techniques to work with derivatives and nonlinear combinations of these functions. Then, we were able to carry out the necessary asymptotic analysis in order to derive a single amplitude equation, for which we could find explicit localised solutions. This approach helps to explain numerical observations that, at small amplitude, fully localised patches appear to consist of an axisymmetric envelope over a domain-covering pattern. 

This approach has several advantages over other attempts to explain the existence of localised patches of patterns. The main advantage is the simplicity of the approach and its generality. In each of the cases presented, we obtain a single equation for the slow-varying amplitude, to which we are able to find explicit localised solutions. The method can easily be extended to general reaction-diffusion systems, as we have shown in \S\ref{s:RD}, and we believe it can be easily extended to more complicated models like Rayleigh--B\'enard convection. 

Our results raise new questions regarding the connections between fully localised patterns and the localised axisymmetric patterns studied in \cite{Lloyd2009LocalizedRadial,McCalla2013Spots}. In the Galerkin finite-Fourier approach of Hill et al.~\cite{Hill2023DihedralSpots,Hill2024DihedralRings}, it was found that some localised dihedral patterns (including hexagons) are qualitatively similar to axisymmetric spots, while others were instead related to axisymmetric rings. We note, however, that the approach described in this paper does not work for purely axisymmetric patterns, since we do not have any equivalent identities for dealing with nonlinear products of just $J_0$ Bessel functions. Furthermore, it is not clear that one would be able to recover our results in a Galerkin finite-Fourier approximation, since we again lose the nonlinear identities that are vital to our analysis. Notably, Remark \ref{rmk:rotate} tells us that arbitrarily rotated hexagons possess the amplitude equation \eqref{e:Ginz-D12} for all rotations $\alpha\in(0.\frac{\pi}{6})$, suggesting that the quadratic-cubic Ginzburg--Landau equation seen in \eqref{e:Ginz-hex} and \eqref{e:Ginz-D12} is highly generic.

Several extensions and problems remain. For instance, it would be interesting to explore radial invasion fronts, where a circular region of pattern widens and invades the trivial state; see~\cite{Castillo-Pinto2019RadialInvasion} for an investigation in this area. Since each amplitude equation is posed on a half-line with continuity condition $\partial_{R}A(0)=0$, it is unclear whether one can go to a travelling frame like in the study of invading one-dimensional fronts. However, translating patches of pattern should still be tractable with the approach described in this paper, since one can go to a travelling frame before then introducing polar coordinates. We note that an amplitude equation does not explain the snaking or non-snaking of the patches observed in numerics and so extending the exponential asymptotics of Kozyreff \& Chapman~\cite{KozyreffChapman2013Hexagon} would be an interesting next step. Finally, the rigorous validation of the amplitude equation for long time scales following the approach in~\cite{Schneider1994ErrorEstimatesGL} would also be worth investigating.

\section*{Acknowledgements}
DJH gratefully acknowledges support from the Alexander von Humboldt Foundation. The idea for this work began during a Postdoc/PhD seminar by Bastian Hilder and Jonas Jansen at Institut Mittag--Leffler during the research program `Order and Randomness in Partial Differential Equations', supported by the Swedish Research Council under grant no. 2021-06594 in the winter semester of 2023.

\bibliographystyle{abbrv}
\bibliography{Bibliography.bib}

\end{document}